 \font \eightrm=cmr8
\newcommand{\nc}{\newcommand}
\nc{\smop}[1]{\mathop{\hbox {\eightrm #1} }\nolimits}
\newtheorem{thm}{Theorem}
\newtheorem{exam}{Example}
\newtheorem{cor}[thm]{Corollary}
\newtheorem{lem}[thm]{Lemma}
\newtheorem{prop}[thm]{Proposition}
\newcommand*{\id}{{{id}}}
\newcommand*{\un}{{\mathbf 1}}
\def\shuff#1#2{\mathbin{
      \hbox{\vbox{\hbox{\vrule \hskip#2 \vrule height#1 width 0pt}\hrule}\vbox{\hbox{\vrule \hskip#2 \vrule height#1 width 0pt\vrule }\hrule}}}}
\def\shuffl{{\mathchoice{\shuff{5pt}{3.5pt}}{\shuff{5pt}{3.5pt}}{\shuff{3pt}{2.6pt}}{\shuff{3pt}{2.6pt}}}}
\def\shuffle{{\, \shuffl \,}}
\def\feedbackA{\scalebox{0.8}{
\fcolorbox{white}{white}{
  \begin{picture}(304,107) (33,-54)
    \SetWidth{1.0}
    \SetColor{Black}
    \Line[arrow,arrowpos=0.5,arrowlength=5,arrowwidth=2,arrowinset=0.2](64,26)(122,27)
    \Line[arrow,arrowpos=0.5,arrowlength=5,arrowwidth=2,arrowinset=0.2](138,26)(192,27)
    \Line[arrow,arrowpos=0.5,arrowlength=5,arrowwidth=2,arrowinset=0.2](224,27)(288,27)
    \Vertex(288,27){3}
    \Line[arrow,arrowpos=0.5,arrowlength=5,arrowwidth=2,arrowinset=0.2](288,27)(336,27)
    \COval(128,27)(10,10)(0){Black}{White}
    \Line[arrow,arrowpos=0.5,arrowlength=5,arrowwidth=2,arrowinset=0.2](288,27)(288,-37)
    \Line[arrow,arrowpos=0.5,arrowlength=5,arrowwidth=2,arrowinset=0.2](288,-37)(224,-37)
    \Line[arrow,arrowpos=0.5,arrowlength=5,arrowwidth=2,arrowinset=0.2,flip](128,-37)(192,-37)
    \Line[arrow,arrowpos=0.5,arrowlength=5,arrowwidth=2,arrowinset=0.2,flip](129,16)(128,-37)
    \Text(240,43)[]{\Large{\Black{$y$}}}
    \Text(48,27)[]{\Large{\Black{$\beta$}}}
    \Text(128,27)[]{\Large{\Black{$+$}}}
    \Text(176,43)[]{\Large{\Black{$u$}}}
    \CBox(192,11)(224,43){Black}{White}
    \Text(208,27)[]{\Large{\Black{$F_c$}}}
    \Line(224,-53)(224,-21)
    \Line(224,-21)(192,-37)
    \Text(208,-37)[l]{\Large{\Black{$\alpha$}}}
    \Line(224,-53)(192,-37)
  \end{picture}
}
}}
\def\feedbackcatd{\scalebox{0.8}{
\fcolorbox{white}{white}{
  \begin{picture}(304,108) (33,-54)
    \SetWidth{1.0}
    \SetColor{Black}
    \Line[arrow,arrowpos=0.5,arrowlength=5,arrowwidth=2,arrowinset=0.2](64,27)(122,28)
    \Line[arrow,arrowpos=0.5,arrowlength=5,arrowwidth=2,arrowinset=0.2](138,27)(192,28)
    \Line[arrow,arrowpos=0.5,arrowlength=5,arrowwidth=2,arrowinset=0.2](224,28)(288,28)
    \Vertex(288,28){3}
    \Line[arrow,arrowpos=0.5,arrowlength=5,arrowwidth=2,arrowinset=0.2](288,28)(336,28)
    \COval(128,28)(10,10)(0){Black}{White}
    \Line[arrow,arrowpos=0.5,arrowlength=5,arrowwidth=2,arrowinset=0.2](288,28)(288,-36)
    \Line[arrow,arrowpos=0.5,arrowlength=5,arrowwidth=2,arrowinset=0.2](288,-36)(224,-36)
    \Line[arrow,arrowpos=0.5,arrowlength=5,arrowwidth=2,arrowinset=0.2,flip](128,-36)(192,-36)
    \Line[arrow,arrowpos=0.5,arrowlength=5,arrowwidth=2,arrowinset=0.2,flip](128,17)(127,-37)
    \Text(240,44)[]{\Large{\Black{$y$}}}
    \Text(48,28)[]{\Large{\Black{$v$}}}
    \Text(128,28)[]{\Large{\Black{$+$}}}
    \Text(176,44)[]{\Large{\Black{$u$}}}
    \CBox(192,12)(224,44){Black}{White}
    \Text(208,28)[]{\Large{\Black{$F_c$}}}
    \CBox(191,-53)(223,-21){Black}{White}
    \Text(207,-37)[]{\Large{\Black{$F_d$}}}
  \end{picture}
}
}}
\def\re{{\mathbb R}} 
\def\allseries{\mbox{$\re\langle\langle X \rangle\rangle$}}
\def\gsc{c} 
\begin{document}

\title[Abel equation and output feedback]{Center problem, Abel equation and the \\ Fa\`a di Bruno Hopf algebra for output feedback}

\vspace{1cm}

\author{Kurusch Ebrahimi-Fard}
\address{ICMAT,
		C/Nicol\'as Cabrera, no.~13-15, 28049 Madrid, Spain {\tiny{(on leave from UHA, Mulhouse, France)}}}
         \email{kurusch@icmat.es, kurusch.ebrahimi-fard@uha.fr}
         \urladdr{www.icmat.es/kurusch}

\author{W.~Steven Gray}
\address{Old Dominion University,
		Norfolk, Virginia 23529 USA}
\email{sgray@odu.edu}
\urladdr{www.ece.odu.edu/$\sim$gray}

\date{\today}

\begin{abstract}
A combinatorial interpretation is given of Devlin's word problem underlying the classical center-focus problem of Poincar\'{e} for non-autonomous differential equations.
It turns out that the canonical polynomials of Devlin are from the point of view of connected graded Hopf algebras intimately related to the graded components of a Hopf algebra antipode applied to the formal power series of Ferfera.
The link is made by passing through control theory since the Abel equation, which describes a center, is equivalent to an output feedback equation, and the Hopf algebra of output feedback is derived from the {\em composition} of iterated integrals rather than just the products of iterated integrals, which yields the shuffle algebra. This means that the primary algebraic structure at play in Devlin's approach is actually not the shuffle algebra, but a Fa\`{a} di Bruno type Hopf algebra, which is defined in terms of the shuffle product but is a distinct algebraic structure.
\end{abstract}


\maketitle
\tableofcontents


\section{Introduction}
\label{sect:intro}

The classical center-focus problem first studied by Poincar\'{e} considers a system of equations
\begin{equation}
\label{eq:Poincare-problem}
	\frac{dx}{dt}=X(x,y),\;\;\; \frac{dy}{dt}=Y(x,y),
\end{equation}
where $X,Y$ are polynomial vectors
fields with a linear part of center or focus type and homogeneous nonlinearities \cite{Poincare_1892}. The equilibrium at the origin is a center if it is contained in an open neighborhood $U$ having no other equilibria, and every trajectory of (\ref{eq:Poincare-problem}) in $U$ is closed. An alternative way to characterize a center is to first put (\ref{eq:Poincare-problem}) into polar coordinates and then pass to the Abel equation 
\begin{equation}
\label{Devlineq}
		\dot{z}(t) = \alpha(t) z^3(t) + \beta(t) z^2(t)	+\gamma(t) z(t)
\end{equation}
via the Cherkas transformation \cite{Cherkas_76}.
This amounts to a re-parametrization so that the independent variable $t$ plays the role of the
angle in the polar coordinate framework \cite{Alwash-LLoyd_87,Lloyd_82}. In this setting,
the origin $z=0$ is a center if it is contained in a neighborhood $\tilde{U}$ such that every solution of
(\ref{Devlineq}) initialized at some $z(0)\in \tilde{U}$ is periodic. Under perturbation of the parameter functions $\alpha$, $\beta$ and $\gamma$, a given periodic solution, including the trivial solution $z=0$, can bifurcate into $\mu\geq 1$  distinct limit cycles. Hilbert's 16th problem asks for the maximum number of limit cycles, $\mu_{\rm max}$.
Not surprisingly, the literature on this problem is extensive (see \cite{Ilyashenko_02,Ilyashenko-Yakovenko_95} for an overview).  The main interest here, however, is in an algebraic approach first proposed by Devlin in 1989 \cite{Devlin_1989,Devlin_1991}, which is based on the work of Alwash and Lloyd \cite{Alwash-LLoyd_87,Lloyd_82}. The latter showed that to determine when $\mu>1$, there is no loss of generality in assuming $\gamma=0$. Specifically, starting from
\begin{equation}
\label{eq:Devlineq-without-gamma}
		\dot{z}(t) = \alpha(t) z^3(t) + \beta(t) z^2(t),
\end{equation}
the method allows one to determine an upper bound on $\mu_{\rm max}$ for various classes of coefficient functions $\alpha$ and $\beta$. (For more recent developments
along these lines, see \cite{Briskin-etal_10,Brudnyi_10}.) The general goal of this paper is to describe the main underlying combinatorial structure behind Devlin's method. This has become
evident only recently due to the application of combinatorial Hopf algebras in feedback control theory \cite{DuffautEFG_2014,Foissy_13,Gray-Duffaut_Espinosa_SCL11,Gray-Duffaut_Espinosa_FdB14,Gray-et-al_SCL14}.

The basic idea behind Devlin's approach is to identify an underlying word problem associated with the solution of (\ref{eq:Devlineq-without-gamma}).
For sufficiently small $r > 0$ in a neighborhood of the origin, let $\Phi(t;0;r;u)$ be the solution $z(t)$ with initial condition $z(0)=r$ and parameter
functions $u=\{\alpha, \beta \}$. In which case, it is possible to write the solution formally as the power series
\begin{equation}
\label{eq:fomal-solution-Abel-equation}
	\Phi(t;0;r;u)=\sum_{n=1}^\infty a_n(t) r^n,
\end{equation}
where the $a_n$ are analytic functions satisfying $a_1(0)=1$ and $a_n(0)=0$ for $n>1$.
For a fixed $\omega>0$, the first return map is defined to be $P(u)(r)=\Phi(\omega;0;r;u)$, and
therefore,
\begin{equation}\label{eq:first-return-map}
	P(u)(r)=\sum_{n=1}^\infty a_n(\omega)r^n.
\end{equation}
Any solution satisfying $z(\omega)=z(0)=r$
has the property that
\begin{displaymath}
	q(r):=P(u)(r)-r=(a_1(\omega)-1)r+\sum_{n=2}^\infty a_n(\omega)r^n=0.
\end{displaymath}
The function $q$ has a zero of multiplicity $\mu>1$ when $a_1(\omega)=1$, $a_n(\omega)=0$ for $n=2,3,\ldots,\mu-1$, and $a_{\mu}(\omega)\neq 0$.
If $\mu=\infty$ then $z=0$ is a center.
It is known that $\mu$ is equivalent to the number of limit cycles the periodic solution $z=0$ can exhibit for sufficiently small perturbations of $\alpha$ and $\beta$. The main idea therefore is to set up a system of equations in terms of the functions $a_n$ in order to bound $\mu$ as a function of $\alpha$ and $\beta$. To do this, Devlin expresses the solution $z(t)$ in terms of a {\em Chen-Fliess functional expansion} or {\em Fliess operator}, that is, a weighted sum of iterated integrals of $\alpha$ and $\beta$ so that each function $a_n$ has an associated generating polynomial of the same name in two noncommuting letters, $x_0$, corresponding to $\alpha$, and $x_1$, corresponding to $\beta$. These canonical polynomials, which Devlin derives from the underlying shuffle algebra over the set of words $X^\ast$ from the alphabet $X=\{x_0,x_1\}$, are central to the analysis. Of special significance is the fact that they are related by the linear recursion
\begin{equation} \label{eq:Devlin-recursion}
	a_n=(n-1)a_{n-1}x_1+(n-2)a_{n-2}x_0,\;\;n\geq 2
\end{equation}
with $a_1=1$ and $a_2=x_1$. For example,
\begin{subequations}
\label{eq:first-few-Devlin-polynomials}
\begin{align}
a_3&=2x_1^2+x_0 \\
a_4&=6x_1^3 +3x_0x_1+2x_1x_0 \label{eq:Devlin-polynomial-a4}\\
a_5&=24x_1^4+12x_0x_1^2+8x_1x_0x_1+6x_1^2x_0+3x_0^2 \\
a_6&=120x_1^5+60x_0x_1^3+40x_1x_0x_1^2+30x_1^2x_0x_1+15x_0^2x_1+24x_1^3x_0+12x_0x_1x_0+8x_1x_0^2.
\end{align}
\end{subequations}
Devlin shows that this recursion yields a remarkably simple
expression for the coefficient of any word $\eta=x_{i_1}\cdots x_{i_k}\in X^\ast$ of $a_n$:
\begin{equation} \label{eq:Devlin-coefficients}
\langle a_n,\eta\rangle=
\left\{
\begin{array}{ccl}
0&:& {\rm deg}(\eta)\neq n \\
1 &:& {\rm deg}(\eta)=n,\;k=0,1 \\
\displaystyle{\prod_{j=1}^{k-1} {\rm deg}(x_{i_1}\cdots x_{i_j})} &:& {\rm deg}(\eta)=n,\;k\geq 2,
\end{array}
\right.
\end{equation}
where the length of the word $\eta$ is denoted $|\eta|=k$, $|\eta|_{x_i}$ is the number of times the letter $x_i$ appears in
$\eta$, and the degree of $\eta$ is defined to be ${\rm deg}(\eta)=2|\eta|_{x_0}+|\eta|_{x_1}+1$. For example,
$\langle a_5,x_1x_0x_1\rangle=\deg(x_1)\deg(x_1x_0)=2\cdot 4$.

The specific goal here is to describe the precise combinatorial nature of Devlin's polynomials $a_n$ and the origin of their linear recursion \eqref{eq:Devlin-recursion}. It turns out that from the point of view of connected graded Hopf algebras, the $a_n$ are intimately related to the graded components of a Hopf algebra antipode applied to the formal power series $-c$, where
\begin{equation}
\label{FFsystem}
	c=\sum_{k = 0}^\infty k!\, x_1^k
\end{equation}
is the so called Ferfera series \cite{Ferfera_79,Ferfera_80}. Somewhat surprisingly, this link is made by passing through control theory. This is ultimately due to the fact that the Abel equation (\ref{Devlineq}) is equivalent to an output feedback equation in control theory, and the Hopf algebra of output feedback is derived from the {\em composition} of iterated integrals rather than just the products of iterated integrals, which yields the shuffle algebra \cite{DuffautEFG_2014,Gray-Duffaut_Espinosa_SCL11,Gray-et-al_SCL14}. This means that the primary algebraic structure at play in Devlin's approach is actually not the shuffle algebra, but a Fa\`{a} di Bruno type Hopf algebra, which is defined in terms of the shuffle product but is a distinct algebraic structure. So after some preliminaries presented in Section~\ref{sect:prelim}, this description of the $a_n$ in terms of an antipode is developed in Section~\ref{sect:Devlin-polynomials}. Now it is a standard theorem that the antipode of every connected graded Hopf algebra can be computed recursively \cite{Figueroa-Gracia-Bondia_05,manchon2}. This fact was exploited, for example, in the authors' application of the output feedback Hopf algebra to compute the feedback product, a device used in control theory to describe the feedback interconnection of two Fliess operators \cite{DuffautEFG_2014,Gray-et-al_MTNS14}. It is shown here, however, that this specific Hopf algebra has another type
of antipode recursion, one that is related to derivations on the shuffle algebra and distinct from the standard recursion. This result is developed in Section~\ref{sect:new-antipode-recursion}. Then in Section~\ref{sect:devlin-recursion} it is shown that Devlin's recursion (\ref{eq:Devlin-recursion}) is a specific example of
this new type of antipode recursion.

\medskip

{\bf{Acknowledgements}}
The first author was supported by Ram\'on y Cajal research grant RYC-2010-06995 from the Spanish government
and acknowledges support from the Spanish government under project MTM2013-46553-C3-2-P.
This research was also supported by a grant from the BBVA Foundation. The authors wish to thank Mr.~Lance
Berlin for developing the Mathematica software used to check some of the results in this manuscript and for
pointing out Corollary~\ref{co:Berlin-identity}.



\section{Preliminaries}
\label{sect:prelim}


\subsection{Combinatorial Hopf algebras}

In this section, a few basics on connected graded Hopf algebras are collected to fix the notation. For details, in particular on Hopf algebras of a combinatorial nature, the reader is referred to \cite{cartier1,ck,Figueroa-Gracia-Bondia_05,GBFV,manchon2}, as well as the standard reference~\cite{Sweedler_69}.

In general, $k$ denotes the ground field of characteristic zero over which all algebraic structures are defined. A counital {\it{coalgebra}} over $k$ consist of a $k$-vector space $C$ on which a coassociative coproduct $\Delta: C \to C \otimes C$ exists, that is, $(\Delta \otimes \id)\circ \Delta=(\id \otimes \Delta)\circ \Delta$ together with the counit map $\varepsilon: C \to k$. A $k$-linear coderivation $D: C \to C$ satisfies the coalgebraic analogue of the Leibniz rule, $\Delta \circ D=(D \otimes \id + \id \otimes D) \circ \Delta:C \to C \otimes C$. A {\it{bialgebra}} $B$ is both a unital algebra and a counital coalgebra together with compatibility relations, such as both the algebra product and unit map are coalgebra morphisms \cite{Sweedler_69}. The product and unit of $B$ are denoted by $m$ and $\mathbf{1}$, respectively.  A bialgebra is called {\it{graded}} if there are $k$-vector subspaces $B^{(n)}$, $n \geq 0$ such that $B= \bigoplus_{n \geq 0} B^{(n)}$, $m(B^{(p)} \otimes B^{(q)}) \subseteq B^{(p+q)}$, and $\Delta B^{(n)} \subseteq \bigoplus_{p+q=n} B^{(p)}\otimes B^{(q)}.$ Elements $x \in B^{(n)}$ are given a degree ${\rm deg}(x)=n$. Moreover, $B$ is called {\it{connected}} if $B^{(0)} = k\mathbf{1}$. Define $B^+=\bigoplus_{n > 0} B^{(n)}$. For any $x \in B^{(n)}$ the coproduct is of the form
\begin{equation}
\label{coprod}
	\Delta x = x \otimes \mathbf{1} + \mathbf{1} \otimes x + \Delta' x \in \bigoplus_{p+q=n} B^{(p)} \otimes B^{(q)},
\end{equation}
where $\Delta' x := \Delta x -  x \otimes \mathbf{1} - \mathbf{1} \otimes x \in B^+ \otimes B^+$ is the {\em reduced} coproduct. By definition an element $x \in B$ is {\it{primitive}} if $\Delta' x = 0.$ It is common to use variants of Sweedler's symbolic notation for the (reduced) coproduct, such as $\Delta x =\sum x_{(1)}\otimes x_{(2)}$ ($\Delta' x ={\sum}' x_{(1)} \otimes x_{(2)}$) or $\Delta x= x^\prime\otimes x^{\prime\prime}$. The action of the grading operator $Y : B \to B$, which is both a derivation and coderivation, is given by $Y h := n h$, where $h$ is a homogeneous element of degree $n$ in $B$. Suppose $A$ is a $k$-algebra with product $m_A$ and unit map $e_A: k \to A$, e.g., $A=k$ or $A=B$. The vector space  $L(B, A)$ of linear maps from the bialgebra $B$ to $A$ together with the convolution product $\Phi \star \Psi := m_{A} \circ (\Phi \otimes \Psi) \circ \Delta : B \to A$, where $\Phi,\Psi \in L(B,A)$, is an associative algebra with unit $\iota := e_{A} \circ \varepsilon$.

A {\it{Hopf algebra}} $H$ is a bialgebra together with a particular $k$-linear map called an {\it{antipode}} $S: H \to H$ which satisfies the Hopf algebra axioms~\cite{Sweedler_69}. When $A=H$, the antipode $S \in L(H,H)$ is characterized as the inverse of the identity map with respect to the convolution product
\begin{equation}
\label{antipode1}
    S  \star \id = \id \star S = e \circ \varepsilon.
\end{equation}
The next theorem is standard \cite{Figueroa-Gracia-Bondia_05,manchon2} and states that any connected graded bialgebra $B$ automatically is a {\sl{connected graded Hopf algebra}\/}.

\begin{thm}\label{th:antipode-induction}
On any connected graded bialgebra $(B,m,\Delta)$ an antipode $S$ can be defined inductively by $S\un:=\un$ and the following two equivalent recursions on $B^+=\rm{Ker}(\varepsilon)$, which follow directly from \eqref{antipode1},
\begin{equation*}
\label{antipode2}
	S = - \id - m \circ (S \otimes \id) \circ \Delta' = - \id - m \circ (\id \otimes S) \circ \Delta',
\end{equation*}
where $\Delta^\prime$ is the reduced coproduct defined in \eqref{coprod}.
\end{thm}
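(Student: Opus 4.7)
My plan is to define $S$ recursively using the grading, verify by a direct expansion of $\Delta x$ that each recursion produces a one-sided convolution inverse of $\id$, and then identify the two recursions by the usual monoid argument in $(L(B,B),\star)$. First I would carry out the inductive construction: set $S\mathbf{1}:=\mathbf{1}$, and for homogeneous $x\in B^{(n)}$ with $n\geq 1$ note that connectedness forces $\Delta'x\in\bigoplus_{p+q=n,\;p,q\geq 1}B^{(p)}\otimes B^{(q)}$, so each tensor factor sits in strictly lower degree; the inductive hypothesis thus makes $(S\otimes\id)\Delta'x$ meaningful, and the formula
\[
Sx:=-x - m\circ(S\otimes\id)\Delta'x
\]
extends linearly to a grading-preserving $k$-linear endomorphism of $B$.

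Next I would verify that this $S$ is a left convolution inverse of $\id$, i.e.\ $S\star\id=\iota=e\circ\varepsilon$. On $B^{(0)}=k\mathbf{1}$ this holds by $S\mathbf{1}=\mathbf{1}$. For $x\in B^+$, expanding $\Delta x = x\otimes\mathbf{1}+\mathbf{1}\otimes x+\Delta'x$ and using $S\mathbf{1}=\mathbf{1}$ gives
\[
(S\star\id)(x)=m(S\otimes\id)\Delta x = Sx + x + m\circ(S\otimes\id)\Delta'x,
\]
which vanishes precisely by the defining recursion. A mirror-image induction produces a second endomorphism $S'$ satisfying $S'\mathbf{1}=\mathbf{1}$ and $S'x=-x-m\circ(\id\otimes S')\Delta'x$ on $B^+$, and the analogous expansion shows $\id\star S'=\iota$; so $S'$ is a right convolution inverse of $\id$.

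Finally, to conclude that the two recursions actually define the same map, I would invoke the standard fact that in any unital associative algebra a one-sided inverse is automatically the unique two-sided inverse whenever the other side also has one. Concretely, associativity and unitality of $\star$ in $L(B,B)$ yield
\[
S = S\star\iota = S\star(\id\star S') = (S\star\id)\star S' = \iota\star S' = S',
\]
so $S=S'$ is a two-sided convolution inverse of $\id$ and hence, by \eqref{antipode1}, the antipode of $B$. The one subtlety I expect is purely organizational: one must check carefully that connectedness really does push both factors of $\Delta'$ into strictly positive degrees summing to $n$, so that the induction is well-founded and convolution products of graded-preserving maps restrict on each $B^{(n)}$ to finite sums with no convergence issue. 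Once that bookkeeping is in place the rest of the argument is essentially formal.
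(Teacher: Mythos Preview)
Your proposal is correct and follows exactly the standard argument: define $S$ by induction on degree using connectedness to ensure the reduced coproduct lands in strictly lower degrees, verify directly that the left (resp.\ right) recursion makes $S$ a left (resp.\ right) convolution inverse of $\id$, and then identify the two via associativity of $\star$. The paper itself does not supply a proof of this theorem---it is stated as standard with references to \cite{Figueroa-Gracia-Bondia_05,manchon2}---and your write-up is precisely the classical proof one finds in those sources.
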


Let $H=\bigoplus_{n \ge 0} H^{(n)}$ be a connected graded Hopf algebra. Suppose $A$ is a commutative unital algebra. The subset $g_0 \subset  L( H, A)$ of linear maps $\alpha$ that send the unit of $H$ to zero, $\alpha(\mathbf{1})=0$, forms a Lie algebra in $ L( H, A)$. The exponential $ \exp^\star(\alpha) = \sum_{j\ge 0} \frac{1}{j!}\alpha^{\star j}$ is well defined and provides a bijection from $g_0$ onto the group $G_0 = \iota + g_0$ of linear maps, $\gamma$, which send the unit of $H$ to the algebra unit, $\gamma(\mathbf{1})=1_{A}$. An {\it{infinitesimal character}} with values in $A$ is a linear map $\xi \in L( H, A)$ such that for $x, y \in  H$, $\xi(xy) = 0$. The linear space of infinitesimal characters is denoted $g_{A} \subset g_0$. An $A$-valued map $\Phi$ in $ L( H, A)$ is called a {\it{character}} if $\Phi(\mathbf{1})=1_{ A}$ and for $x,y \in  H$, $\Phi(xy) = \Phi(x)\Phi(y)$. The set of characters is denoted by $G_{ A} \subset G_0$. It forms a pro-unipotent group for the convolution product with (pro-nilpotent) Lie algebra $ g_{ A}$. The exponential map $\exp^{\star}$ restricts to a bijection between $ g_{ A}$ and $G_{ A}$. The neutral element $\iota:=e_{ A}\circ \epsilon$  in $G_{ A}$ is given by $\iota(\mathbf{1})=1_{ A}$ and $\iota(x) = 0$ for $x \in \rm{Ker}(\varepsilon)=H^+$. The key property is the fact that the inverse of a character $\Phi \in G_{ A}$ is given by composition with the Hopf algebra antipode $S$, i.e., $\Phi^{\star -1} = \Phi \circ S$  \cite{GBFV,manchon2}.


\subsection{Output feedback Hopf algebra}
\label{sect:feedback}

Fliess operators and their interconnections are briefly reviewed next. For details the reader is referred to \cite{Ferfera_79,Ferfera_80,Fliess_81,Fliess_83,Gray-Duffaut_Espinosa_SCL11,Gray-Duffaut_Espinosa_FdB14,Gray-et-al_SCL14,Gray-Li_05,Gray-Wang_SCL02,Isidori_95,Thitsa-Gray_12}.

Iterated integrals play a central role in the analysis of nonlinear feedback control systems \cite{Isidori_95}.
It is well known that they come with a natural algebraic structure, i.e., products of iterated integrals can again be written as linear combinations of iterated integrals. Indeed, the classical integration by parts rule implies that the product of two Riemann integrals is given by
$$
	\int^t_0 u_1(x){\mathrm{d}}x  \int^t_0 u_2(y){\mathrm{d}}y
	= \int^t_0 \int^x_0 u_1(y) u_2(x){\mathrm{d}}y{\mathrm{d}}x
	+ \int^t_0 \int^x_0 u_1(x) u_2(y){\mathrm{d}}y{\mathrm{d}}x.
$$
This generalizes to the shuffle product for iterated integrals
$$
	f_n(u_1,\ldots,u_n)(t):=\int_{\Delta^n_{[0,t]}} u_1(a_1)  u_2(a_2) \cdots u_n(a_n) {\mathrm{d}}a_1 \cdots {\mathrm{d}}a_n,
$$
where $\Delta^n_{[0,t]}:=\{(a_1,\ldots, a_n),\  0\le a_1 \le \cdots \le a_n \le t\}$, as follows
\begin{align}
\label{ChenCl}
	& f_n(u_1,\ldots,u_n)(t)f_m(u_{n+1},\ldots,u_{n+m})(t) = \\
	& \quad \sum_{\sigma \in \smop{Sh}_{n,m}}
	\int\limits_{\Delta^{n+m}_{[0,t]}} u_{\sigma_1^{-1}}(a_1)  u_{\sigma_2^{-1}}(a_2)
	 \cdots u_{\sigma_{n+m}^{-1}}(a_{n+m}){\mathrm{d}}a_1 \cdots  {\mathrm{d}}a_{n+m}. \nonumber
\end{align}
Here $\mathrm{Sh}_{n,m}$ is the set of $(n,m)$-shuffles, i.e., permutations $\sigma $ on the set $[n+m]:=\{1,\ldots, n+m\}$, such that $\sigma_1<\cdots < \sigma_n$ and $\sigma_{n+1}<\cdots <\sigma_{n+m}$. It turns out to be rather convenient to abstract the product of iterated integrals in terms of a formal shuffle product on words. Reutenauer's monograph \cite{reutenauer} provides an exhaustive reference on this topic. Under concatenation, the set $X^*$ of words $\eta:=x_{i_1}\cdots x_{i_p}$ with letter $x_{i_j}$, $j \in \{1,\ldots ,p\}$ from the alphabet $X:=\{x_0,x_1\}$ forms a monoid. The empty word in $X^*$ containing no letters is denoted by~$\emptyset$, and the length of a word $\eta \in X^*$, written as $| \eta | \in \mathbb{N}_0$,
is equivalent to the number of letters it contains. The resulting graded vector space ${\mathbb Q}\langle X\rangle$, which is freely generated by words from $X^*$, is turned into a commutative graded algebra by defining the shuffle product of words
\begin{equation}
\label{shu-prod}
	x_{i_1}\cdots x_{i_p}\shuffle x_{i_{p+1}}\cdots x_{i_{p+q}}:=
		\sum_{\sigma\in\smop{Sh}_{p,q}} x_{\sigma_{i_1}^{-1}}\cdots x_{\sigma_{i_{p+q}}^{-1}}
\end{equation}
with $x_{i_j}\in X$, $j \in\{1,\ldots ,p+q\}$. The monomial $\un:=1\emptyset$ acts as a unit for the shuffle product so that
 $\un \shuffle \eta = \eta \shuffle \un =\eta$ for all $\eta \in X^*$. Product (\ref{shu-prod}) can be defined recursively as
\begin{align*}
	x_{i_1}\cdots x_{i_p}\shuffle x_{i_{p+1}}\cdots x_{i_{p+q}}
	&= x_{i_1}\big( x_{i_2}\cdots x_{i_p}\shuffle x_{i_{p+1}}\cdots x_{i_{p+q}} \big)\\
	&\quad\qquad + x_{i_{p+1}}\big( x_{i_1}\cdots x_{i_p}\shuffle x_{i_{p+2}}\cdots x_{i_{p+q}} \big), \nonumber
\end{align*}
or, equivalently, as
\begin{align*}
	x_{i_1}\cdots x_{i_p}\shuffle x_{i_{p+1}}\cdots x_{i_{p+q}}
	&= \big( x_{i_1}\cdots x_{i_{p-1}}\shuffle x_{i_{p+1}}\cdots x_{i_{p+q}} \big)x_{i_p}\\
	&\quad\qquad + \big( x_{i_1}\cdots x_{i_p}\shuffle x_{i_{p+2}}\cdots x_{i_{p+q-1}} \big)  x_{i_{p+q}}. \nonumber
\end{align*}
For example,
\begin{align*}
	x_{i_1} \shuffle x_{i_{2}}
		&= x_{i_1} x_{i_{2}} + x_{i_2}x_{i_{1}}\\
	x_{i_1} \shuffle x_{i_2} x_{i_{3}}
		&= x_{i_1} x_{i_{2}} x_{i_3} + x_{i_{2}} (x_{i_1} x_{i_{3}} + x_{i_3}x_{i_{1}}) .	
\end{align*}
The shuffle algebra $(\mathbb Q\langle X\rangle , \shuffle)$ is a connected graded Hopf algebra. The coproduct is defined by decatenation, that is, for any word $\eta=x_{i_1}\cdots x_{i_p}$
$$
	\Delta \eta := \eta \otimes \un + \un \otimes \eta + \sum_{l=1}^{p-1} x_{i_1}\cdots x_{i_{l}} \otimes x_{i_{l+1}}\cdots x_{i_p}.
$$
The antipode is given by the simple formula $S\eta= (-1)^{p} x_{i_p}\cdots x_{i_1}$.
In control theory, iterated integrals over a set of functions $u:=\{u_0,u_1\}$ are defined inductively by letting $E_{\emptyset}[u](t):=1$ and
$$
	E_{x_{i}\eta}[u](t):=\int_0^t u_{i}(s) E_{\eta}[u](s){\mathrm{d}}s,
$$
where $x_i\in X$ and $\eta\in X^\ast$.
The product \eqref{shu-prod} is homogeneous with respect to the length of words, and (\ref{ChenCl}) implies that for any $\eta, \xi \in X^\ast$
the corresponding product of iterated integrals satisfies
$$
	E_{\eta}[u](t)E_{\xi}[u](t) = E_{\eta \shuffle \xi}[u](t).
$$

Any mapping $c: X^\ast \rightarrow \mathbb{R}$ is called a formal power series with values in $\mathbb{R}$. The value of $c$ at $\eta \in X^\ast$ is written as $\langle c,\eta \rangle\in \mathbb{R}$ and called the coefficient of the word $\eta$ in $c$. The series $c$ is represented as the formal sum
$$
	c=\sum_{\eta\in X^\ast}\langle c,\eta \rangle\eta,
$$
and the collection of all formal power series over $X$ is denoted by $\mathbb{R}\langle\langle X \rangle\rangle$. It forms a unital associative $\mathbb{R}$-algebra under the catenation product and a unital associative and commutative $\mathbb{R}$-algebra under the shuffle product. A {\it{Fliess operator}} with generating series $c$ is defined to be the sum of iterated integrals over $u$ weighted by the coefficients of $c$, namely,
\begin{align*}
	F_c[u](t) := \sum_{\eta\in X^{\ast}} \langle c,\eta \rangle\,E_\eta[u](t).
\end{align*}
Given two Fliess operators $F_c$ and $F_d$ corresponding to $c,d \in \mathbb{R}\langle\langle X \rangle\rangle$, the parallel and product connections satisfy $F_c + F_d=F_{c+d}$ and $F_c F_d=F_{c \shuffle d}$, respectively \cite{Fliess_81}. When Fliess operators $F_c$ and $F_d$ are interconnected in a cascade fashion, the composite system $F_c \circ F_d$ has the Fliess operator representation $F_{c\circ d} \in \mathbb{R}\langle \langle X \rangle\rangle$, where the composition product
$$
	c\,  \circ\,  d := \sum_{\eta \in X^\ast}
	\langle c,\eta \rangle\, \psi_d(\eta)(\un)
$$
is defined through $\psi_{d}$, which is the continuous (in the ultrametric sense) algebra homomorphism from $\mathbb{R}\langle \langle X\rangle \rangle$ to ${\rm End}(\mathbb{R}\langle \langle X \rangle \rangle)$ uniquely specified by $\psi_{d}(x_i  \eta):=\psi_{d}(x_i) \circ \psi_{d}(\eta)$ with
\begin{equation*}
	\psi_{d}(x_0)({e}) := x_0e,\;\;\psi_{d}(x_1)({e}) := x_0(d \shuffle {e})
\end{equation*}
for any ${e} \in \mathbb{R}\langle \langle X \rangle \rangle$ \cite{Ferfera_79,Ferfera_80,Gray-Li_05}. By definition, $\psi_{d}(\un)$ is the identity map on $\mathbb{R}\langle \langle X\rangle \rangle$.

\begin{figure}[htb]
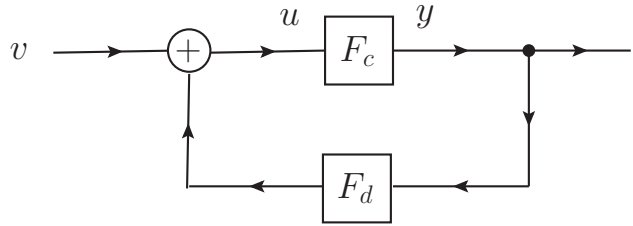

$$
	\feedbackcatd
$$
\caption{General output feedback system}
\label{fig:feedback-system-Fc-at-Fd}
\end{figure}

The output feedback connection of two Fliess operators as shown in Figure~\ref{fig:feedback-system-Fc-at-Fd} is much harder to characterize than the previous interconnections due to its recursive nature. While it is easily shown by fixed point arguments that the mapping $v\mapsto y$ has a Fliess operator representation, giving an explicit formula for its generating series, denoted here by the feedback product $c\, @\, d$, requires more sophisticated machinery \cite{Foissy_13,Gray-Duffaut_Espinosa_SCL11,Gray-Duffaut_Espinosa_FdB14,Gray-et-al_SCL14}. This is the origin of the output feedback Hopf algebra as described next.
First consider the set of operators
$$
	\mathcal{F}:=\{I+F_c: c \in \mathbb{R}\langle \langle X\rangle \rangle\},
$$
where $I$ denotes the identity operator. The symbol $\delta$ serves as the (fictitious) generating series for this identity map. That is, $F_\delta:=I$ such that $I+F_c := F_{\delta+c}=F_{c_\delta}$ with $c_\delta:=\delta+c$. The set of all such generating series for $\mathcal{F}$ will be denoted by $\mathbb{R}\langle \langle X_\delta \rangle \rangle$. The modified composition product of $c,d  \in \mathbb{R}\langle\langle X\rangle\rangle$ is defined by
$$
	c\, \tilde\circ\, d = \sum_{\eta \in X^\ast} \langle c,\eta \rangle\, \phi_d(\eta)(\un),
$$
where $\phi_{d}$ is the continuous (in the ultrametric sense) algebra homomorphism from $\mathbb{R}\langle \langle X\rangle \rangle$ to ${\rm End}(\mathbb{R}\langle \langle X \rangle \rangle)$ uniquely specified by $\phi_{d}(x_i  \eta):=\phi_{d}(x_i) \circ \phi_{d}(\eta)$ with
\begin{equation*}
	\phi_{d}(x_0)({e}) := x_0e,\quad \phi_{d}(x_1)(e) := x_1{e}+x_0(d \shuffle e),
\end{equation*}
for any ${e} \in \mathbb{R}\langle \langle X \rangle \rangle$, and $\phi_{d}(\un)$ is the identity map on $\mathbb{R}\langle \langle X \rangle \rangle$. It can be shown that
\begin{equation*}
	(x_0 c)\, \tilde{\circ}\, d = x_0(c\, \tilde{\circ}\, d),\quad
		(x_1 {c})\, \tilde\circ\, {d} = x_1 ({c}\, \tilde\circ\, {d}) +
		 x_0\big(d \shuffle ({c}\ \tilde\circ\ {d})\big). 	
\end{equation*}
The {\it{output feedback group}} is a fundamental object in this analysis. It is defined in terms of the group composition product on $\mathcal{F}$, namely,
$$
	F_{c_\delta}\circ F_{d_\delta}=(I+F_c)\, \circ\, (I+F_d) = F_{c_\delta\, \circ\, d_\delta},
$$
where $c_\delta \circ d_\delta := \delta+d+c \tilde\circ d$. Note that the same symbol will be used for composition on $\mathbb{R}\langle\langle X \rangle\rangle$ and composition on $\mathbb{R}\langle\langle X_\delta \rangle\rangle$. As elements in these two sets have a distinct notation, i.e., $c$ versus $c_\delta$, respectively, it will always be clear which product is at play. The generating series for the feedback group also forms a group as described in the next theorem.

\begin{thm} \label{th:allseriesdeltam-is-group}
The triple $(\mathbb{R}\langle\langle X_\delta \rangle\rangle,\circ,\delta)$ is a group with unit $\delta$.
\end{thm}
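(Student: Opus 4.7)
The plan is to verify the three group axioms---identity, associativity, and inverses---directly from the definition $c_\delta \circ d_\delta = \delta + d + c \,\tilde\circ\, d$, reducing each to a property of the modified composition product. For the identity I check $c_\delta \circ \delta = c_\delta = \delta \circ c_\delta$. The right identity amounts to $c \,\tilde\circ\, 0 = c$: from the defining recursion one has $\phi_0(x_0)(e) = x_0 e$ and $\phi_0(x_1)(e) = x_1 e + x_0(0 \shuffle e) = x_1 e$, so a short induction on word length gives $\phi_0(\eta)(\un) = \eta$ for every $\eta \in X^\ast$, and hence $c \,\tilde\circ\, 0 = c$. The left identity $\delta \circ c_\delta = c_\delta$ is immediate, since $0 \,\tilde\circ\, c$ is an empty sum.

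For associativity, expanding both sides of $(c_\delta \circ d_\delta) \circ e_\delta = c_\delta \circ (d_\delta \circ e_\delta)$ and cancelling common terms reduces the claim to the mixed associativity identity
\[
(c \,\tilde\circ\, d) \,\tilde\circ\, e \;=\; c \,\tilde\circ\, (e + d \,\tilde\circ\, e).
\]
I would prove this by passing to the Fliess operator side. Comparing $(I+F_c)\circ(I+F_d)$ with $F_{\delta + d + c\,\tilde\circ\, d}$ forces the interpretation $F_{c\,\tilde\circ\, d}[v] = F_c[v + F_d[v]]$. Under this correspondence, both sides of the desired identity evaluated at $v$ equal $F_c\bigl[(v + F_e[v]) + F_d[v + F_e[v]]\bigr]$, and injectivity of the assignment $c \mapsto F_c$ on $\mathbb{R}\langle\langle X \rangle\rangle$ transports the equality back to formal power series.

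Existence of inverses proceeds by a fixed-point argument in the ultrametric topology on $\mathbb{R}\langle\langle X \rangle\rangle$. For $c_\delta = \delta + c$, the equation $c_\delta \circ d_\delta = \delta$ with $d_\delta = \delta + d$ rearranges to $d = -c \,\tilde\circ\, d$. The key contraction property is that the degree-$n$ coefficient of $c \,\tilde\circ\, d$ depends only on coefficients of $d$ of degree strictly less than $n$, because every occurrence of $d$ enters $\phi_d(x_1)(e) = x_1 e + x_0(d \shuffle e)$ already prefixed by an $x_0$. Hence the Picard iteration $d^{(k+1)} := -c \,\tilde\circ\, d^{(k)}$ starting from $d^{(0)} := 0$ stabilises on each finite degree after finitely many steps, producing a unique right inverse. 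The two-sided property then follows from associativity by the standard argument that right and left inverses of a given element coincide in any monoid.

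The main obstacle is the mixed associativity identity. While the Fliess operator viewpoint makes it essentially transparent, it presupposes the interpretation of $\tilde\circ$ as substitution of inputs into $F_c$; a purely algebraic verification would require a delicate induction on the words of $c$ combined with the shuffle-compatibility of $\phi_d$, essentially reprising the computations underlying Ferfera's original construction of the modified composition product.
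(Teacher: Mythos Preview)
The paper does not actually prove this theorem; it is stated in the preliminaries as a known result drawn from the cited references on the output feedback group. So there is no in-paper proof to compare against, and your task is really to supply one.

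Your outline is sound and matches the way this result is established in the background literature. The identity computations are correct. The reduction of associativity to the mixed identity $(c\,\tilde\circ\,d)\,\tilde\circ\,e = c\,\tilde\circ\,(e + d\,\tilde\circ\,e)$ is exactly right, and this identity is indeed the crux. Your Fliess-operator justification is the morally correct picture, but as you yourself flag, invoking injectivity of $c\mapsto F_c$ at the level of \emph{formal} power series is delicate: that injectivity is usually stated for locally convergent series over a sufficiently rich input class. A fully self-contained proof therefore needs the algebraic induction you allude to (showing $\phi_e(\eta\,\tilde\circ\,d)(\un)=\phi_{e+d\,\tilde\circ\,e}(\eta)(\un)$ word by word), which is precisely what is done in the Gray--Li and Ferfera references cited in the paper. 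So your proposal is correct in structure, with the associativity step outsourced to a known lemma rather than proved from scratch.

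Your inverse argument is fine. The contraction claim holds for the length filtration: since every occurrence of $d$ in $\phi_d(\eta)(\un)$ is wrapped as $x_0(d\shuffle\cdot)$, the length-$n$ coefficient of $c\,\tilde\circ\,d$ depends only on coefficients of $d$ of length at most $n-1$, so the Picard iterates stabilise degree by degree. The passage from right to left inverse via associativity is the standard monoid argument and is valid here because your fixed-point construction furnishes a right inverse for \emph{every} element.
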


For any word $\eta \in X^\ast$, the coordinate functions, $a_\eta$, on the feedback group in Theorem \ref{th:allseriesdeltam-is-group} are defined as elements of the dual space $\mathbb{R}^*\langle \langle X_\delta \rangle \rangle$, i.e., maps on formal series $c_\delta= \delta + c =  \delta + \sum_{\eta \in X}\langle{c},\eta\rangle \eta \in \mathbb{R}\langle \langle X_\delta \rangle \rangle$ giving coefficients of words $\eta \in X^*$
$$
	a_\eta(c) := \langle c , \eta \rangle.
$$
In this context, $a_\delta$ denotes the coordinate function with respect to $\delta$, i.e., $a_\delta(\delta)=1$, and zero otherwise.
The feedback group can be considered to be the group of characters with respect to a connected graded Hopf algebra underlying the coordinate functions. This
is emphasized by the notation $c_\delta (a_\eta):=a_\eta(c_\delta )$ for $\eta \in X^* \cup \{\delta \}$. However, the reader is warned that henceforth the coefficient function $a_\delta$ is identified with the symbol $\un$ such that $\un(\delta):=1$. This Hopf algebraic point of view, presented next, originated in a series of papers starting with \cite{Gray-Duffaut_Espinosa_SCL11,Gray-et-al_SCL14} and continued in \cite{DuffautEFG_2014,Foissy_13,Gray-Duffaut_Espinosa_FdB14}, where the Fa\`a di Bruno Hopf algebra defined in terms of these coordinate functions was developed. The link to the classical Fa\`a di Bruno Hopf algebra \cite{manchonfrabetti,GBFV} follows from the compositional nature of the feedback group in Theorem \ref{th:allseriesdeltam-is-group}.

Let $V$ denote the $\mathbb{R}$-vector space spanned by the coordinate functions. For a word $\eta\in X^\ast$, let $|\eta|_{x_0}$ and $|\eta|_{x_1}$ denote, respectively,
the number of times the letters $x_0$ and $x_1$ appear in the word.
If the {\em degree} of $a_{\eta}$ is defined as $\deg(a_{\eta})=\deg(\eta)=2|\eta|_{x_0}+|\eta|_{x_1}+1$ and $\deg(\un)=0$, then $V$ is a connected graded vector space, that is, $V=\bigoplus_{n\geq 0} V_n$
with
$$
	V_n=\big\langle a_\eta:\deg(a_\eta)=n\big\rangle_\mathbb{R}
$$
and $V_0=\mathbb{R}\un$. It is rather striking to see that Devlin's choice of grading coincides with that of Foissy in \cite{Foissy_13}. This indicates how natural the point of view of graded connected Hopf algebras matches the algebraic underpinnings of Devlin's work. Consider next the free unital commutative $\mathbb{R}$-algebra, $H$, with product
\begin{equation*}
	\mu:	a_\eta\otimes a_{\xi} \mapsto a_{\eta}a_{\xi},
\end{equation*}
and unit $\un$, which is induced by the counit identity $\epsilon(a_{\eta}):=\un(a_{\eta})=0$ for all $\eta\in X^\ast$.
This product is associative. The graduation on $V$ induces a connected graduation on $H$ with $\deg(a_\eta a_\xi)=\deg(a_\eta)+\deg(a_\xi)$.
Specifically, $H=\bigoplus_{n\geq 0} H_n$, where $H_n$ is the set of all elements of degree $n$ and $H_0=\mathbb{R}\un$. Moreover, by definition
\begin{equation}
\label{character}
	a_{\eta_1} \cdots a_{\eta_l} (c) := a_{\eta_1}(c) \cdots a_{\eta_l}(c)
\end{equation}
for $c_\delta = \delta + c \in \mathbb{R}\langle \langle X_\delta \rangle \rangle$.

To describe the coalgebra on $H$, first consider the left- and right-augmentation operators
$\theta_{i}$ and $\tilde{\theta}_{i}$, respectively, where $i=0,1$. These are endomorphisms on $V$ which concatenate letters
in the following manner:
$$
	\theta_{i}(a_\eta):=a_{x_i\eta}, \quad\quad \tilde\theta_{i}(a_\eta):=a_{\eta x_i},
$$
and thereby increase the degree of an element in $V$ by either one ($i=1$) or two ($i=0$). For any word $\eta= x_{i_1} \cdots x_{i_l}$, observe that
$$
	\theta_\eta (a_\emptyset):= \theta_{{i_1}} \circ \cdots \circ \theta_{{i_l}} (a_\emptyset)
	=\tilde\theta_\eta (a_\emptyset)
	:= \tilde\theta_{{i_l}} \circ \cdots \circ \tilde\theta_{{i_1}} (a_\emptyset).
$$	
In particular, note that $\theta_j \tilde\theta_i = \tilde\theta_i \theta_j$ for $i,j=0,1$. Both the right- and left-augmentation operators are by definition extended to derivations on the algebra $H$ so that
$$
	 \tilde\theta_{i}(a_{\eta_1} \cdots a_{\eta_l})
	 := \sum_{k=1}^l a_{\eta_1} \cdots a_{\eta_k x_i} \cdots a_{\eta_l}
$$
and analogously for left-augmentation. On the unit $\un$ of $H$, $\tilde\theta_{i}(\un)=\theta_{i}(\un):=0$.
The deshuffling coproduct $\Delta_\shuffle V^+ \subset V^+\otimes V^+$ is defined by
\begin{subequations}
\label{eq:shuffle-coproduct-induction}
\allowdisplaybreaks{
\begin{align}
	\Delta_{\shuffle} a_{\emptyset}&=a_{\emptyset}\otimes a_{\emptyset} \\
	\Delta_{\shuffle}\circ\theta_k&=(\theta_k\otimes id + id \otimes \theta_k)\circ\Delta_{\shuffle}\\
	\Delta_{\shuffle}\circ\tilde\theta_k&=(\tilde\theta_k\otimes id + id \otimes \tilde\theta_k)\circ\Delta_{\shuffle}. \label{deshuffle}
\end{align}}%
\end{subequations}
For example, the first few terms of $\Delta_{\shuffle}$ are:
\allowdisplaybreaks{
\begin{align*}
	\Delta_{\shuffle} a_{\emptyset}&=a_{\emptyset}\otimes a_{\emptyset} \\
	\Delta_{\shuffle} a_{x_{i_1}}&=a_{x_{i_1}}\otimes a_{\emptyset}+a_{\emptyset}\otimes a_{x_{i_1}} \\
	\Delta_{\shuffle} a_{x_{i_2}x_{i_1}}&=a_{x_{i_2}x_{i_1}}\otimes a_{\emptyset}
				+a_{x_{i_2}}\otimes a_{x_{i_1}}
				+a_{x_{i_1}}\otimes a_{x_{i_2}}+ a_{\emptyset}\otimes a_{x_{i_2}x_{i_1}}.
\end{align*}}%
In the following, Sweedler's notation \cite{Sweedler_69} is employed for the deshuffling coproduct so that
$\Delta_\shuffle a_\eta=a_{\eta'}\otimes a_{\eta''}$.

The first of two coproducts is defined on $H$, namely, the coproduct $\tilde{\Delta}H\subset V\otimes H$ given by the identity
\begin{align*}
	\tilde{\Delta} a_{\eta}(c,d) 	&=a_{\eta}(c \ \tilde\circ\ d)=\langle c \ \tilde\circ\ d,\eta \rangle \nonumber \\
						&=:\sum a_{\eta(1)}(c)\;a_{\eta(2)}(d) \nonumber \\
						&=\sum a_{\eta(1)}\otimes a_{\eta(2)}(c,d) \label{eq:tilde-delta-identity},
\end{align*}
where $c,d \in \mathbb{R}\langle \langle X\rangle \rangle$, $a_{\eta(1)}\in V$ and $a_{\eta(2)} \in H$. Note that $a_{\eta(2)}(d) $ is defined according to \eqref{character}. The summation is taken over all terms for which $\langle c \ \tilde\circ\ d,\eta \rangle$ is different from zero. The reader should note that a second Sweedler notation is employed for this coproduct, specifically, $\tilde{\Delta}a_{\eta}=\sum a_{\eta(1)}\otimes  a_{\eta(2)}$, which should not be confused with the one used for the deshuffling coproduct given above. The key observation here is that the coproduct $\tilde{\Delta}$ can be computed inductively as described in the following lemma.

\begin{lem} \label{le:tilde-delta-inductions}
The following identities hold:
\begin{enumerate}

\item
	$\tilde{\Delta} a_\emptyset=a_{\emptyset} \otimes \un$

\item
	$\tilde{\Delta}\circ \theta_1= (\theta_1 \otimes \id) \circ \tilde{\Delta}$

\item
	$\tilde{\Delta}\circ \theta_0=(\theta_0 \otimes \id) \circ \tilde{\Delta}
	+ (\theta_1 \otimes \mu)\circ(\tilde{\Delta}\otimes \id)\circ \Delta_{\shuffle}$,

\end{enumerate}
where $\id$ denotes the identity map on $H$.
\end{lem}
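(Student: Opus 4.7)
The strategy is to verify each identity by evaluating both sides on a generic pair $(c,d)\in\mathbb{R}\langle\langle X\rangle\rangle^{2}$ via the defining relation \eqref{eq:tilde-delta-identity}, namely $\tilde{\Delta}a_\eta(c,d)=\langle c\,\tilde\circ\,d,\eta\rangle$. The essential preliminary is a leading-letter decomposition of the modified composition. Writing $c=\langle c,\emptyset\rangle\emptyset+x_0c_0+x_1c_1$, where $c_i$ is defined by $\langle c_i,\eta\rangle=\langle c,x_i\eta\rangle$, linearity of $\tilde\circ$ together with the recursions for $\tilde\circ$ displayed just before Theorem~\ref{th:allseriesdeltam-is-group} gives
\[
	c\,\tilde\circ\,d\;=\;\langle c,\emptyset\rangle\emptyset\;+\;x_0\bigl[(c_0\,\tilde\circ\,d)+d\shuffle(c_1\,\tilde\circ\,d)\bigr]\;+\;x_1(c_1\,\tilde\circ\,d).
\]
Identity~(1) follows immediately from the $\emptyset$-coefficient: $\tilde{\Delta}a_\emptyset(c,d)=\langle c,\emptyset\rangle=a_\emptyset(c)\,\un(d)$, using the convention $\un(d)=1$. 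Identity~(2) follows from the $x_1\eta$-coefficient $\tilde{\Delta}a_{x_1\eta}(c,d)=\langle c_1\,\tilde\circ\,d,\eta\rangle=\tilde{\Delta}a_\eta(c_1,d)$: expanding $\tilde{\Delta}a_\eta=\sum a_{\eta(1)}\otimes a_{\eta(2)}$ and using $\langle c_1,\eta(1)\rangle=\langle c,x_1\eta(1)\rangle=(\theta_1 a_{\eta(1)})(c)$ reassembles the right-hand side as $[(\theta_1\otimes\id)\tilde{\Delta}a_\eta](c,d)$.

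Identity~(3) is the substantive case. The $x_0\eta$-coefficient of $c\,\tilde\circ\,d$ splits as $\langle c_0\,\tilde\circ\,d,\eta\rangle+\langle d\shuffle(c_1\,\tilde\circ\,d),\eta\rangle$. The first summand produces $[(\theta_0\otimes\id)\tilde{\Delta}a_\eta](c,d)$ by the same manipulation as in~(2) with $\theta_0$ in place of $\theta_1$. The second summand requires the duality between the deshuffling coproduct and the shuffle product on series, namely $a_\eta(f\shuffle g)=\sum a_{\eta'}(f)\,a_{\eta''}(g)$ where $\Delta_\shuffle a_\eta=\sum a_{\eta'}\otimes a_{\eta''}$. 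This is the defining property of $\Delta_\shuffle$ and is easily verified by induction on $|\eta|$ from \eqref{eq:shuffle-coproduct-induction} together with the elementary shuffle recursion $\langle f\shuffle g,x_k\eta\rangle=\langle f_k\shuffle g,\eta\rangle+\langle f\shuffle g_k,\eta\rangle$. Using commutativity of $\shuffle$, one rewrites the second summand as $\sum a_{\eta'}(c_1\,\tilde\circ\,d)\,a_{\eta''}(d)$, then applies \eqref{eq:tilde-delta-identity} again to unfold $a_{\eta'}(c_1\,\tilde\circ\,d)=\sum(\theta_1 a_{\eta'(1)})(c)\,a_{\eta'(2)}(d)$, and finally uses the character property \eqref{character}, $(a_{\eta'(2)}a_{\eta''})(d)=a_{\eta'(2)}(d)\,a_{\eta''}(d)$, to collapse the two $d$-evaluations into the algebra product $\mu$. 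The assembly recovers exactly $[(\theta_1\otimes\mu)\circ(\tilde{\Delta}\otimes\id)\circ\Delta_\shuffle\,a_\eta](c,d)$.

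The principal obstacle in~(3) is the bookkeeping for three nested Sweedler expansions --- one for $\Delta_\shuffle$, one for $\tilde{\Delta}$ on its outer factor, and one implicit in the character property that re-combines factors through $\mu$ --- which must be threaded in the precise order dictated by the claimed composition of operators. Once commutativity of $\shuffle$, the defining duality of $\Delta_\shuffle$, and \eqref{character} are in hand, no further technical difficulty arises, and the three identities together in fact furnish a fully recursive algorithm for computing $\tilde{\Delta}$ on any $a_\eta$.
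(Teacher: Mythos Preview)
Your argument is correct. The paper itself does not supply a proof of this lemma; it is stated as a preliminary fact imported from the earlier references \cite{DuffautEFG_2014,Gray-Duffaut_Espinosa_SCL11,Gray-et-al_SCL14}. Your approach---evaluating both sides on arbitrary pairs $(c,d)$ via the defining identity $\tilde\Delta a_\eta(c,d)=\langle c\,\tilde\circ\,d,\eta\rangle$, then using the leading-letter recursions for $\tilde\circ$ together with the shuffle/deshuffle duality and the character property \eqref{character}---is precisely the natural route and is how the result is established in those sources. The only step you leave implicit is the separation argument (that equality after evaluation on all $(c,d)$ forces equality in $V\otimes H$), which follows from $H$ being free polynomial on the $a_\eta$ and the coefficients $\langle d,\eta\rangle$ being independently assignable; this is routine and safe to omit.
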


The {\em full coproduct} on $H$ is defined as
\begin{equation}
\label{def:fullcoprod1}
	\Delta a_\eta:= \tilde{\Delta}a_\eta + \un \otimes a_\eta,
\end{equation}
while the {\em reduced} coproduct is given by $\Delta'a_\eta :=\tilde{\Delta}a_\eta -  a_\eta \otimes \un$.
The following theorem summarizes the properties of the output feedback Hopf algebra $H$.

\begin{thm} \label{th:output-feedback-HA}
$(H,\mu,\Delta,\varepsilon,S)$ is a connected graded commutative non-cocommutative Hopf algebra.
\end{thm}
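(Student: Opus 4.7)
The unital commutative algebra structure is built into the construction: $H$ is the free commutative $\mathbb{R}$-algebra on the coordinate functions $\{a_\eta\}$, and extending $\deg(a_\eta)=2|\eta|_{x_0}+|\eta|_{x_1}+1$ multiplicatively with $\deg(\un)=0$ yields the decomposition $H=\bigoplus_{n\geq 0}H_n$ with $H_0=\mathbb{R}\un$, so $H$ is connected graded as an algebra. The coproduct $\Delta$ on $V$ is extended to all of $H$ as an algebra homomorphism $H\to H\otimes H$, which automatically installs the bialgebra compatibility of $\Delta$ with $\mu$. What remains is to verify (i) coassociativity of $\Delta$, (ii) compatibility of $\Delta$ with the grading, (iii) existence of the antipode, and (iv) non-cocommutativity.

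For (i) the cleanest route is to exploit the group structure of Theorem~\ref{th:allseriesdeltam-is-group}. Combining the defining identity $\tilde{\Delta}a_\eta(c,d)=a_\eta(c\,\tilde\circ\,d)$ with the formula $c_\delta\circ d_\delta=\delta+d+c\,\tilde\circ\,d$ shows that $\Delta a_\eta(c_\delta,d_\delta)=a_\eta(c_\delta\circ d_\delta)$; extending this multiplicatively gives $\Delta p(c_\delta,d_\delta)=p(c_\delta\circ d_\delta)$ for every $p\in H$, where a tensor is evaluated on a pair via $(p_{(1)}\otimes p_{(2)})(c_\delta,d_\delta):=p_{(1)}(c_\delta)\,p_{(2)}(d_\delta)$. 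Iterating once more, $(\Delta\otimes\id)\Delta a_\eta$ and $(\id\otimes\Delta)\Delta a_\eta$ evaluated on a triple $(c_\delta,d_\delta,e_\delta)$ produce $a_\eta((c_\delta\circ d_\delta)\circ e_\delta)$ and $a_\eta(c_\delta\circ(d_\delta\circ e_\delta))$ respectively. Associativity of $\circ$ then forces coassociativity, provided the pairing of tensors in $H\otimes H\otimes H$ with triples of series is nondegenerate---a direct consequence of the coordinate functions forming a polynomial basis on $\mathbb{R}\langle\langle X_\delta\rangle\rangle$.

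For (ii) I would induct on degree using Lemma~\ref{le:tilde-delta-inductions}. The base case $\tilde{\Delta}a_\emptyset=a_\emptyset\otimes\un$ sits in $H_1\otimes H_0$. The identity $\tilde{\Delta}\circ\theta_1=(\theta_1\otimes\id)\tilde{\Delta}$ shifts the left tensor factor up by one, matching $\deg(\theta_1 a_\eta)=\deg(a_\eta)+1$. For the $\theta_0$ recursion, the first term $(\theta_0\otimes\id)\tilde{\Delta}a_\eta$ obviously lives in total degree $\deg(a_\eta)+2$. For the second term, note that the shuffle product satisfies $\deg(a_{\eta\shuffle\xi})=\deg(a_\eta)+\deg(a_\xi)-1$, so the deshuffling coproduct sends $V_n$ into $\bigoplus_{p+q=n+1}V_p\otimes V_q$; applying $(\tilde{\Delta}\otimes\id)$ preserves this total degree by the induction hypothesis, after which $(\theta_1\otimes\mu)$ adds exactly one more (via $\theta_1$, with $\mu$ preserving total degree). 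The result lives in total degree $\deg(a_\eta)+2=\deg(\theta_0 a_\eta)$, as required. Multiplicative extension of $\tilde{\Delta}$ to $H$ inherits this property, and adding $\un\otimes a_\eta\in H_0\otimes H_n$ produces the full $\Delta$.

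Step (iii) is then free: $H$ is a connected graded bialgebra, so Theorem~\ref{th:antipode-induction} supplies the antipode $S$. For (iv), one explicit computation suffices: applying the $\theta_0$ recursion to $a_\emptyset$, together with $\Delta_\shuffle a_\emptyset=a_\emptyset\otimes a_\emptyset$, gives $\tilde{\Delta}a_{x_0}=a_{x_0}\otimes\un+a_{x_1}\otimes a_\emptyset$, whence $\Delta a_{x_0}=a_{x_0}\otimes\un+a_{x_1}\otimes a_\emptyset+\un\otimes a_{x_0}$, which is manifestly not symmetric under the tensor flip. The principal obstacle in this plan is coassociativity: although the group-theoretic argument is conceptually clean, it requires care in justifying that the abstract equality on triples of series transports to an equality of tensors in $H\otimes H\otimes H$, and an alternative purely inductive proof directly from the identities in Lemma~\ref{le:tilde-delta-inductions} would also be possible but considerably bulkier.
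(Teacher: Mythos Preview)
The paper does not supply a proof of this theorem in the text; it is stated as a summary of results established elsewhere (the references \cite{DuffautEFG_2014,Foissy_13,Gray-Duffaut_Espinosa_SCL11,Gray-Duffaut_Espinosa_FdB14,Gray-et-al_SCL14} that surround the statement), so there is no in-paper argument to compare against line by line. Your sketch, however, is precisely the standard route used in those sources: coassociativity is inherited from associativity of the group law on $\mathbb{R}\langle\langle X_\delta\rangle\rangle$ via the duality $\Delta a_\eta(c_\delta,d_\delta)=a_\eta(c_\delta\circ d_\delta)$; the grading is established inductively from Lemma~\ref{le:tilde-delta-inductions} (your degree count for the $\theta_0$ branch, using $\deg(a_{\eta'})+\deg(a_{\eta''})=\deg(a_\eta)+1$ for the deshuffling coproduct, is correct); connectedness then yields the antipode via Theorem~\ref{th:antipode-induction}; and the explicit formula $\Delta a_{x_0}=a_{x_0}\otimes\un+\un\otimes a_{x_0}+a_{x_1}\otimes a_\emptyset$ witnesses non-cocommutativity.

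The only point deserving care, which you already flag, is the passage from ``equal on all triples $(c_\delta,d_\delta,e_\delta)$'' to ``equal as tensors in $H\otimes H\otimes H$.'' This is handled by the fact that $H$ is the free commutative algebra on the $a_\eta$, so distinct monomials in the $a_\eta$ are linearly independent, and characters $c_\delta$ can be chosen with arbitrary prescribed values on any finite set of coordinate functions; hence the evaluation pairing separates tensors. With that remark made explicit, your argument is complete and matches the treatment in the cited literature.
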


The central object used for defining the feedback product is the antipode $S$ on $H = \bigoplus_{n\ge 0} H_n$.  It satisfies the identity $a_\eta(\gsc^{\circ -1})=Sa_\eta(\gsc)$ for all $\eta \in X^\ast$, where the feedback group inverse is given by $c_\delta^{\circ -1}=\delta+c^{\circ -1}$, which is calculated by composing $c_\delta$ as a Hopf algebra character with the Hopf algebra antipode $S$. This statement together with \eqref{character} implies that, in Hopf algebraic terms, the feedback group in Theorem \ref{th:allseriesdeltam-is-group} is the group of Hopf algebra characters over $H$.
From Theorem \ref{th:antipode-induction}, for any $a_\eta \in H^+$, that is, $\deg(a_\eta) > 0$, the antipode can be computed by
\begin{equation}
	S a_\eta=-a_\eta -{\sum}' (S a_{\eta(1)})a_{\eta(2)}=-a_\eta - {\sum}'  a_{\eta(1)}S a_{\eta(2)}.
\label{eq:general-antipode-induction}
\end{equation}
Hence, a totally inductive algorithm to compute the antipode for the output feedback group is given as follows.

\begin{thm} \cite{Gray-et-al_MTNS14}
The antipode $S$ applied to any $a_\eta \in V_k$, $k \geq 1$ in the output feedback Hopf algebra $H$ can be computed by the following algorithm:
\begin{description}
\item[\hspace*{0.25in}i] 	Inductively compute $\Delta_{\shuffle}$ via \eqref{eq:shuffle-coproduct-induction}.
\item[\hspace*{0.2in}ii] 	Inductively compute $\tilde{\Delta}$ via Lemma~\ref{le:tilde-delta-inductions}.
\item[\hspace*{0.17in}iii] 	Inductively compute $S$ via \eqref{eq:general-antipode-induction}.
\end{description}
\end{thm}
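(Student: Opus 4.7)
The plan is to verify that each of the three inductive stages is well-posed --- meaning its recursion strictly decreases some complexity measure, so that it terminates after finitely many steps on any input $a_\eta$ --- and that composing the three stages returns precisely $Sa_\eta$. The substantive content has already been established elsewhere in the paper: the defining identities for $\Delta_\shuffle$, Lemma~\ref{le:tilde-delta-inductions}, and the general antipode recursion of Theorem~\ref{th:antipode-induction}. What remains is a bookkeeping check that the inductive dependencies close correctly.

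For stage (i), I would induct on the word length $|\eta|$. The base case $\Delta_\shuffle a_\emptyset = a_\emptyset \otimes a_\emptyset$ is built in. Any word of positive length has the form $x_k \eta'$ for some letter $x_k$ and some shorter word $\eta'$; then the identity $\Delta_\shuffle \circ \theta_k = (\theta_k \otimes \id + \id \otimes \theta_k) \circ \Delta_\shuffle$ in \eqref{eq:shuffle-coproduct-induction} reduces the computation to $\Delta_\shuffle a_{\eta'}$, which is known by induction. The step is symmetric under right-augmentation.

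For stage (ii), I would induct on the degree $n = \deg(a_\eta)$. The base case $n=1$ (namely $a_\emptyset$) is handled by part (1) of Lemma~\ref{le:tilde-delta-inductions}. For the inductive step, a word $\eta$ of positive length begins with either $x_1$ or $x_0$. If $\eta = x_1 \tau$, part (2) gives $\tilde{\Delta} a_\eta = (\theta_1 \otimes \id)\tilde{\Delta} a_\tau$ with $\deg(\tau) = n-1$, so the hypothesis applies directly. If $\eta = x_0 \tau$, part (3) gives $\tilde{\Delta} a_\eta = (\theta_0 \otimes \id)\tilde{\Delta} a_\tau + (\theta_1 \otimes \mu)(\tilde{\Delta} \otimes \id)\Delta_\shuffle a_\tau$; here I would need to check that every $a_{\tau'}$ appearing in the first tensor slot of $\Delta_\shuffle a_\tau$ has $\deg(\tau') < n$. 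A direct calculation from $\deg(\gamma) = 2|\gamma|_{x_0} + |\gamma|_{x_1} + 1$ shows that the summands of $\Delta_\shuffle a_\tau$ satisfy $\deg(\tau') + \deg(\tau'') = \deg(\tau) + 1 = n-1$; since $\deg(\tau'') \geq 1$ (as $V_0 = \mathbb{R}\un$ contains no $a_\gamma$), one obtains $\deg(\tau') \leq n-2 < n$, and the hypothesis applies. This degree-accounting is the only nontrivial check, and is where I anticipate the main obstacle --- one must trust that the grading on $H$ induced by $\deg$ is compatible with the otherwise word-length-based recursion for $\Delta_\shuffle$.

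Stage (iii) then follows directly from Theorem~\ref{th:antipode-induction} applied to the connected graded Hopf algebra $H$ of Theorem~\ref{th:output-feedback-HA}. The reduced coproduct required by \eqref{eq:general-antipode-induction} is $\Delta' a_\eta = \tilde{\Delta} a_\eta - a_\eta \otimes \un$, which is available from stage (ii). Because $\Delta'$ respects the grading, its summands lie in $H^+ \otimes H^+$ with degrees adding to $n$, so each first tensor factor $a_{\eta(1)}$ has degree at most $n-1$; thus \eqref{eq:general-antipode-induction} expresses $Sa_\eta$ using only $S$ on strictly lower-degree elements, and the induction closes. Finally, since $H$ is commutative and $S$ is an antihomomorphism, knowing $S$ on the generators $a_\eta \in V$ determines $S$ on all of $H$, completing the algorithm.
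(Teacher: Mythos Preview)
Your proposal is correct. The paper does not give its own proof of this theorem; it is stated with a citation to \cite{Gray-et-al_MTNS14} and then immediately illustrated by listing the first few antipode values. So there is no in-paper proof to compare against, and your termination argument supplies what the paper omits.

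The one place worth a second glance is your degree arithmetic in stage~(ii). When $\eta=x_0\tau$ with $\deg(\eta)=n$, you have $\deg(\tau)=n-2$ (the letter $x_0$ contributes $2$), and the deshuffle identity $\deg(\tau')+\deg(\tau'')=\deg(\tau)+1$ indeed gives $n-1$; combined with $\deg(\tau'')\geq 1$ this yields $\deg(\tau')\leq n-2$, so the recursive call to $\tilde{\Delta}$ lands on a strictly lower-degree coordinate function. That is exactly the check needed, and your computation is right. Stage~(iii) then closes because $\tilde{\Delta}$ takes values in $V\otimes H$, so the left tensor factor in the reduced coproduct is always a single generator $a_{\eta(1)}\in V$ of degree $<n$, and $S$ on products in the right factor is handled multiplicatively since $H$ is commutative.
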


Using this algorithm, the first few antipode terms are found to be:
\begin{subequations}
\label{eq:antipode-terms}
\begin{align}
H_0&:S\un = \un \\
H_1&:S a_\emptyset=-a_\emptyset \\
H_2&:S a_{x_1}=-a_{x_1} \\
H_3&:S a_{x_0}=-a_{x_0}+ a_{x_1}a_\emptyset \\
H_3&:S a_{x_1x_1}=-a_{x_1x_1} \\
H_4&:S a_{x_0x_1}=-a_{x_0x_1}+a_{x_1}a_{x_1}+a_{x_1x_1}a_\emptyset \\
H_4&:S a_{x_1x_0}=-a_{x_1x_0}+ a_{x_1x_1}a_{\emptyset} \\
H_4&:S a_{x_1x_1x_1}=-a_{x_1x_1x_1} \\
H_5&:S a_{x_0x_0}=-a_{x_0x_0}+a_{x_1}a_{x_0}+a_{x_1 x_0}a_\emptyset+ a_{x_0x_1}a_\emptyset-
a_{x_1}a_{x_1} a_{\emptyset}-a_{x_1x_1}a_{\emptyset}a_{\emptyset}.
\end{align}
\end{subequations}

Finally, an explicit formula for the feedback product is given in the next theorem in terms of the
output feedback antipode. It is this theorem that is exploited in the next section to provide
a combinatorial interpretation of Devlin's polynomials.

\begin{thm}
\label{th:feedback-product-formula}
For any $c,d\in\allseries$ it follows that
$$
	c\, @ \, d=c\, \tilde{\circ}\, ((-d)\, \circ\,  c)^{\circ -1}.
$$
\end{thm}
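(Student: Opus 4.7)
The plan is to unfold the implicit feedback equation by exploiting the group structure on $\mathbb{R}\langle\langle X_\delta \rangle\rangle$ from Theorem~\ref{th:allseriesdeltam-is-group}. First, I would read off the closed-loop equations from Figure~\ref{fig:feedback-system-Fc-at-Fd}: the input to $F_c$ is $u = v + F_d[y]$ and the output is $y = F_c[u]$. Substituting the second into the first and using that the cascade of Fliess operators corresponds to the composition product, one obtains $u = v + F_{d \circ c}[u]$, which rearranges to
$$ v \;=\; u - F_{d \circ c}[u] \;=\; F_{\delta + ((-d) \circ c)}[u], $$
with $\circ$ on the right being the cascade composition on $\mathbb{R}\langle\langle X \rangle\rangle$.

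Next, I would invert the map $u \mapsto v$. Because $\delta + ((-d) \circ c)$ lies in the feedback group, it has a group inverse, which by the convention of Section~\ref{sect:feedback} is written $\delta + ((-d) \circ c)^{\circ -1}$. Since $c_\delta \mapsto F_{c_\delta}$ is a group isomorphism onto $\mathcal{F}$, operator inversion on $\mathcal{F}$ matches group inversion in $(\mathbb{R}\langle\langle X_\delta \rangle\rangle, \circ, \delta)$, so
$$ u \;=\; F_{\delta + ((-d) \circ c)^{\circ -1}}[v] \;=\; v + F_{((-d) \circ c)^{\circ -1}}[v]. $$

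Feeding this back into $y = F_c[u]$ and using the defining identity of the modified composition, $F_c[v + F_e[v]] = F_{c \,\tilde{\circ}\, e}[v]$ for any $e \in \mathbb{R}\langle\langle X \rangle\rangle$, which is immediate from $(I+F_c) \circ (I+F_e) = F_{c_\delta \circ e_\delta}$ after cancelling the $(I + F_e)[v]$ term, yields
$$ y \;=\; F_{c \,\tilde{\circ}\, ((-d) \circ c)^{\circ -1}}[v]. $$
The uniqueness of the generating series of a Fliess operator then gives $c \, @ \, d = c \,\tilde{\circ}\, ((-d) \circ c)^{\circ -1}$.

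The chief obstacle is the inversion step: one must verify that $F_{\delta + f}$ and $F_{(\delta + f)^{\circ -1}}$ are genuine mutual inverse operators, which is exactly the content of the isomorphism between $(\mathbb{R}\langle\langle X_\delta \rangle\rangle, \circ, \delta)$ and $(\mathcal{F}, \circ, I)$ underlying the feedback group construction. Everything else is a chain of direct substitutions; in particular, no unproved associativity of $\tilde{\circ}$ is needed, since the only nontrivial algebraic identity invoked is $F_c[v+F_e[v]] = F_{c\,\tilde{\circ}\,e}[v]$, which is already built into the definition of the modified composition.
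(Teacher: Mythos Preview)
The paper does not actually supply a proof of this theorem: it appears in the preliminaries (Section~\ref{sect:feedback}) as a cited result, with the details deferred to the references \cite{Gray-Duffaut_Espinosa_SCL11,Gray-et-al_SCL14,DuffautEFG_2014}. So there is nothing in the paper itself to compare against.

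That said, your argument is correct and is essentially the standard derivation found in those references. The chain---reading off $u=v+F_{d\circ c}[u]$ from the diagram, rewriting it as $v=F_{\delta+(-d)\circ c}[u]$ (using linearity of the composition product in its left argument), inverting in the group $(\mathbb{R}\langle\langle X_\delta\rangle\rangle,\circ,\delta)$, and then applying $F_c[v+F_e[v]]=F_{c\,\tilde\circ\,e}[v]$---is exactly the intended route. Your identification of the only nontrivial step (that operator inversion on $\mathcal F$ matches group inversion in $\mathbb{R}\langle\langle X_\delta\rangle\rangle$) is apt; this is precisely the content of Theorem~\ref{th:allseriesdeltam-is-group} together with the fact that $c_\delta\mapsto F_{c_\delta}$ respects composition.
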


It is worth noting that this formula holds even when $d$ is formally replaced with $\delta$, or equivalently, $F_d$ is replaced with $F_\delta=I$ in Figure~\ref{fig:feedback-system-Fc-at-Fd}. This so called {\em unity feedback system}, a central notion in control theory, has a closed-loop Fliess operator representation with generating series
\begin{equation}
\label{eq:generating-series-unity-feedback-system}
	c\, @\, \delta=(-c)^{\circ -1},
\end{equation}
since in general $e:=c\, @\, \delta$ must satisfy the fixed point equation $e=c\, \tilde{\circ}\,  e$, while at the same time from Theorem~\ref{th:feedback-product-formula} it follows that $e=c\, \tilde{\circ}\,  (-c)^{\circ -1}$.
Finally, results such as Lemma \ref{le:tilde-delta-inductions}, Theorem~\ref{th:output-feedback-HA} and Theorem \ref{th:feedback-product-formula}
can be easily extended to the multivariable case, that is, the situation where the alphabet $X:=\{x_0,x_1,\ldots,x_m\}$ has $m\geq 2$ letters \cite{Gray-et-al_SCL14}.


\section{Devlin's polynomials from the feedback antipode}
\label{sect:Devlin-polynomials}

The starting point for a combinatorial interpretation of Devlin's work is the re-writing of (\ref{eq:Devlineq-without-gamma}) in terms of the shuffle product. Consider a Fliess operator with input functions $u_0=\alpha$ and $u_1=\beta$. In which case, if $z=F_e[u]$ for some generating series $e\in\re\langle\langle X\rangle\rangle$ then (\ref{eq:Devlineq-without-gamma}) is equivalent to
\begin{displaymath}
	u_0 F_{x_0^{-1}(e)}+u_1 F_{x_1^{-1}(e)}=u_0 F_{e^{\shuffle\,3}}+u_1 F_{e^{\shuffle\,2}},
\end{displaymath}
where the left-shift operator, $x_i^{-1}(\cdot)$, is defined on $X^\ast$ by $x_i^{-1}(x_i\eta)=\eta$ with $\eta\in X^\ast$ and zero otherwise. It is assumed to act linearly on series.
Note that the left-shift operator may be regarded as the adjoint of the left-augmentation map, that is,  $\langle a_{x_i\eta}, x_j \nu \rangle =\langle \theta_i a_\eta, x_j \nu \rangle = \langle a_\eta, x_i^{-1} (x_j \nu) \rangle =\delta_{i,j} \langle a_\eta,  \nu \rangle.$ Here $\delta_{i,j}$ denotes the usual Kronecker delta.
Analogously, the right-shift operator $\tilde{x}_i^{-1}(\cdot)$ is defined on $X^\ast$ by $\tilde{x}_i^{-1}(\eta x_j)=\delta_{i,j}\eta$ for $\eta \in X^\ast$.
Since generating series are unique and the functions $u_i$ are not fixed, this expression is equivalent to the pair of shuffle equations
\begin{displaymath}
\label{eq:Devlin-shuffle-equations}
	x_0^{-1}(e)=e^{\shuffle\,3},\;\;\;
	x_1^{-1}(e)=e^{\shuffle\,2},
\end{displaymath}
which can be combined to give
\begin{equation}
\label{eq:feedback-eqn-e}
	e=(e,\emptyset)+x_0e^{\shuffle\,3}+x_1e^{\shuffle\,2}.
\end{equation}
From here the strategy is to solve this shuffle equation for $e$ in order to determine an explicit expression for the first return map (\ref{eq:first-return-map}). The first observation is that if $c$ is the Ferfera series (\ref{FFsystem}), then its corresponding input-output map $y=F_c[u]$ has a simple state space realization. In light of the shuffle algebra identity $k!\,x_1^k=x_1^{\shuffle k}$, it follows directly that
\begin{displaymath}
	y(t) = \sum^{\infty}_{k = 0} E^k_{x_1}[u](t)= \big(1 - E_{x_1}[u](t)\big)^{-1}.	
\end{displaymath}
Defining the state $z=y$, then
 $\dot{z}(t) = \big(1 - E^k_{x_1}[u](t)\big)^{-2}u(t)=z^2(t)u(t)$. So $y=F_c[u]$ has the one dimensional state space realization
\begin{equation}
\label{FFsystemState}
	\begin{array}{ll}
	\dot{z}(t) = z^2(t)u(t), 	& z(0)=1\\
	y(t)=z(t).			&
	\end{array}
\end{equation}%
\begin{figure}[htb]
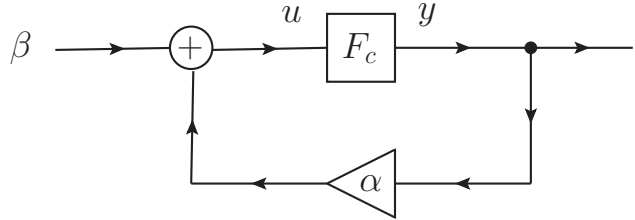

$$
	\feedbackA
$$
\caption{Output feedback system corresponding to (\ref{DevlinsystemState})}
\label{fig:Devlin-feedback-system}
\end{figure}%
Next system \eqref{FFsystemState} is put in an output feedback loop as shown in Figure~\ref{fig:Devlin-feedback-system} so that $u = \beta +\alpha y$. The function $y$ must therefore satisfy the feedback equation $y=F_c[\beta + \alpha y]$. Equivalently, the closed-loop system has the state space
realization
\begin{equation}
\label{DevlinsystemState}
	\begin{array}{ll}
		\dot{z}(t) = \alpha(t) z^3(t) + \beta(t) z^2(t),		& z(0)=1\\
		y(t)=z(t). 								&
	\end{array}		
\end{equation}
The solution to the feedback equation in integral form is
\begin{displaymath}
	z(t)=1+\int_0^t \alpha(\tau) z^3(\tau)\,{\rm d}\tau+\int_0^t \beta(\tau) z^2(\tau)\,{\rm d}\tau,
\end{displaymath}
which is equivalent to $z(t)=F_e[u]$ if $e$ satisfies the shuffle equation (\ref{eq:feedback-eqn-e}). In control theory it is most common to have $u_0=\alpha=1$. But from a combinatorial perspective, nothing is changed if $\alpha$ is left arbitrary. In which case, this feedback system can be identified with a unity feedback system from which the next main result follows. It is useful here to define the projection $(\cdot)_n:\re\langle\langle X\rangle\rangle\rightarrow \re\langle X\rangle: c\mapsto c_n$, where $c_n=\sum_{\eta\in X^*_n} \langle c,\eta \rangle \eta$, and $X^*_n$ is the set of all words in $X^*$ with degree $n$.

\begin{thm} \label{thm:Devlin-polynomials}
The polynomials
\begin{displaymath}
	a_n:=((-c)^{\circ -1})_n=\sum_{\eta\in X^*_n} Sa_\eta(-c)\eta,\quad n\geq 1
\end{displaymath}
with $c=\sum_{k\geq 0} k!\,x_1^k$ satisfy
\begin{displaymath}
	a_n(t)=F_{a_n}[u](t),
\end{displaymath}
where $u:=\{\alpha, \beta\}$, and the functions $a_n(t)$ are the components of the formal solution \eqref{eq:fomal-solution-Abel-equation} of the Abel equation \eqref{eq:Devlineq-without-gamma}.
\end{thm}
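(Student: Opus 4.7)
The plan is to realize the full formal solution $\Phi(t;0;r;u)$ of the Abel equation as a single Fliess operator whose generating series is $(-c)^{\circ -1}$, and then to pick off Devlin's polynomials as its graded components. Most of the infrastructure is already in place in the excerpt: the closed-loop system \eqref{DevlinsystemState}, with $z(0)=1$, has been identified as a unity feedback loop around the Ferfera operator $F_c$, so the unity feedback formula \eqref{eq:generating-series-unity-feedback-system} yields $z(t)=F_{(-c)^{\circ -1}}[u](t)$ with $u_0=\alpha$ and $u_1=\beta$, viewed as a Fliess operator in two inputs (the observation that nothing combinatorial changes when $u_0$ is arbitrary rather than $u_0=1$ is precisely what is invoked here).

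The $r$-dependence is introduced by rescaling. Setting $\tilde z(t):=z(t)/r$, where $z$ now solves \eqref{eq:Devlineq-without-gamma} with initial value $z(0)=r$, a short calculation shows that $\tilde z$ satisfies the Abel equation with initial value $1$ and rescaled driving functions $(r^2\alpha,\,r\beta)$. Applying the first step to the rescaled problem gives
\begin{equation*}
	z(t)=r\,F_{(-c)^{\circ -1}}\!\bigl[r^2\alpha,\,r\beta\bigr](t).
\end{equation*}
Expanding the Fliess operator word-by-word and tracking powers of $r$, each occurrence of $x_0$ (respectively $x_1$) in a word $\eta$ pulls out a factor $r^2$ (respectively $r$) from $E_\eta$, so $E_\eta[r^2\alpha,\,r\beta](t)=r^{2|\eta|_{x_0}+|\eta|_{x_1}}E_\eta[u](t)$. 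Together with the external $r$, the total weight of $r$ attached to $E_\eta$ is exactly $\deg(\eta)=2|\eta|_{x_0}+|\eta|_{x_1}+1$, and grouping terms by degree gives
\begin{equation*}
	z(t)=\sum_{n\geq 1}r^n\sum_{\eta\in X^*_n}\langle (-c)^{\circ -1},\eta\rangle\,E_\eta[u](t)=\sum_{n\geq 1}r^n\,F_{a_n}[u](t).
\end{equation*}

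The identification $\langle (-c)^{\circ -1},\eta\rangle=Sa_\eta(-c)$ recovering the second form of $a_n$ in the statement is just the Hopf-algebraic description of the feedback inverse (the character identity $\Phi^{\star -1}=\Phi\circ S$ recalled just before Theorem~\ref{th:allseriesdeltam-is-group}) applied to the character corresponding to $-c$. Matching the two expansions of $z(t)=\Phi(t;0;r;u)$ term by term in $r$ and invoking uniqueness of the formal power series then yields $a_n(t)=F_{a_n}[u](t)$. The only genuinely delicate point is the degree bookkeeping: one must verify that the (seemingly ad hoc) grading $\deg(\eta)=2|\eta|_{x_0}+|\eta|_{x_1}+1$ of the output feedback Hopf algebra is precisely the grading dictated by the scaling homogeneity of the Abel equation in $z$. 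I expect this is the step doing the real conceptual work, since after it the theorem follows by a direct coefficient comparison, and it also makes transparent why Devlin's original recursion is naturally graded by exactly this weight.
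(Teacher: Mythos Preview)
Your argument is correct, and it is in fact more explicit than the paper's own proof. The paper simply sets $r=1$, identifies the undifferentiated sum $z(t)=\sum_{n\geq 1} a_n(t)$ with the closed-loop output $F_{(-c)^{\circ -1}}[u](t)$ via \eqref{eq:generating-series-unity-feedback-system}, and declares that ``the claim follows immediately''; the separation of the individual graded pieces $a_n$ is left to the reader (presumably via Devlin's own identification of each $a_n(t)$ with a sum of iterated integrals over words of degree $n$, together with the uniqueness of generating series). Your rescaling $\tilde z=z/r$ instead keeps $r$ as a bookkeeping parameter and shows that it filters the Fliess expansion precisely by the Foissy degree $\deg(\eta)=2|\eta|_{x_0}+|\eta|_{x_1}+1$, so the graded comparison is automatic. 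This buys you a self-contained argument that simultaneously explains \emph{why} this particular grading is forced, which is exactly the conceptual point you flag at the end; the paper's approach is shorter but relies on that grading being already in place from Devlin's work and the Hopf algebra construction.
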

\begin{proof}
Recall that the elements in the feedback group are the characters with respect to $H$ such that $Sa_\eta(-c) = (-c_\delta)\circ Sa_\eta = \langle (-c)^{\circ -1},\eta \rangle$ with $-c = \sum_{k\geq 0} (-k!)\,x_1^k$. Setting $z(0)=r=1$ in (\ref{eq:fomal-solution-Abel-equation}) it is clear that the solution $z(t)=\sum_{n=1}^\infty a_n(t)$ of the Abel equation \eqref{eq:Devlineq-without-gamma} can be identified with the output of state space realization (\ref{DevlinsystemState}). Since this system has a Fliess operator representation with generating series $(-c)^{\circ -1}$ as per (\ref{eq:generating-series-unity-feedback-system}), the claim follows immediately.
\end{proof}

\begin{exam}
Suppose $n=4$. Then
$$
	a_4=\langle (-c)^{\circ -1},x_0x_1\rangle x_0x_1
		+\langle (-c)^{\circ -1},x_1x_0\rangle x_1x_0
		+\langle (-c)^{\circ -1},x_1x_1x_1\rangle x_1x_1x_1,
$$
and from \eqref{eq:antipode-terms} it follows that
\begin{align*}
\langle (-c)^{\circ -1},x_0x_1\rangle&=Sa_{x_0x_1}(-c)=a_{x_1}(-x_1)a_{x_1}(-x_1)+a_{x_1x_1}(-2!x_1x_1)a_{\emptyset}(-1)=3 \\
\langle (-c)^{\circ -1},x_1x_0\rangle&=Sa_{x_1x_0}(-c)=a_{x_1x_1}(-2!x_1x_1)a_{\emptyset}(-1)=2 \\
\langle (-c)^{\circ -1},x_1x_1x_1\rangle&=Sa_{x_1x_1x_1}(-c)=-a_{x_1x_1x_1}(-3!\,x_1x_1x_1)=6,
\end{align*}
which agrees with \eqref{eq:Devlin-polynomial-a4}.
\end{exam}

If the only goal is to simply compute the coefficients of the polynomials $a_n$, control theorists will recognize a more direct route using the realization
(\ref{DevlinsystemState}). Consider any input-output map $y=F_c[u]$ with a finite dimensional control-affine state space
realization of the form
\begin{displaymath}
	\begin{array}{ll}
		\dot{z}= g_0(z)+g_1(z)u_1,	&z(0)=z_0\\
		y= h(z),					&
	\end{array}
\end{displaymath}
where each $g_j$ and $h$ is an analytic vector field and function, respectively,
on some neighborhood $W$ of $z_0$.
Then the generating series $c$ can be differentially generated from the vector fields via
\begin{equation*}
	\langle c,\eta \rangle=L_{g_{\eta}}h(z_0),\quad \forall\eta\in X^\ast, \label{eq:c-equals-Lgh}
\end{equation*}
where
\begin{displaymath}
	L_{g_{\eta}}h:=L_{g_{j_1}}\cdots L_{g_{j_k}}h, \quad \eta=x_{j_k}\cdots x_{j_1},
\end{displaymath}
the {\em Lie derivative} of $h$ with respect to $g_j$ is defined as
\begin{displaymath}
	L_{g_j} h: W\rightarrow \re: z \mapsto \frac{\partial h}{\partial z}(z)\, g_j(z),
\end{displaymath}
and $L_{g_\emptyset}h=h$ \cite{Fliess_83,Isidori_95,Wang_90}. Therefore, setting $g_0(z)=z^3$ and $g_1(z)=z^2$, formula (\ref{eq:Devlin-coefficients}) follows immediately. While this approach does not provide any direct insight into the underlying combinatorial structures that are present, it does inspire the new general antipode recursion described in the next section. This is somewhat surprising given that not every $c\in\re \langle\langle X\rangle\rangle$ has {\em local coordinates} in the sense that it is differentially generated by a set of vector fields \cite{Fliess_83}.


\section{New recursion for the feedback antipode}
\label{sect:new-antipode-recursion}

Given the standard antipode recursions in Theorem~\ref{th:antipode-induction} for connected graded Hopf algebras and the identity in Theorem~\ref{thm:Devlin-polynomials}
relating Devlin's polynomials to such an antipode, it seems plausible that Devlin's recursion (\ref{eq:Devlin-recursion}) is simply one such antipode recursion in some specific
form. Interestingly, this appears not to be the case. In light of Lemma~\ref{le:tilde-delta-inductions}, it is clear that the left-augmentation maps play a key role in the
standard antipode recursions, but in this section a different antipode recursion will be developed, one in which the right-augmentation map is the main player. Not only can
this new recursion be linked directly to (\ref{eq:Devlin-recursion}), as described in the next section, but it also provides some new insights into the output feedback
antipode itself as well as a simple algorithm for computer implementation.

The first theorem provides an alternative to the recursion in Lemma~\ref{le:tilde-delta-inductions} for computing the coproduct
of the output feedback Hopf algebra.
It is useful to introduce
the multiplication map $\kappa_\eta$ on $H$ for any $\eta\in X^\ast$ with the defining property that
$$
	\kappa_\eta(a_\nu) := a_\nu a_\eta, \quad \nu\in X^\ast.
$$	
Note, in particular, that $\kappa_\eta(\un) = a_\eta$ and $ \kappa_\emptyset \circ \tilde\theta_i  (a_\eta) = a_{\eta x_i} a_\emptyset$, whereas from the Leibniz rule
$$
	\tilde\theta_i \circ \kappa_\emptyset (a_\eta) = a_i a_\eta + a_{\eta x_i} a_\emptyset.
$$

\begin{thm}
For any nonempty word $\eta= x_{i_1} \cdots x_{i_l}$, the coproduct \eqref{def:fullcoprod1} is given by
\begin{equation}
\label{def:fullcoprod2}
	\Delta a_\eta = \Delta \tilde\theta_\eta (a_\emptyset) = \tilde\Theta_{\eta} \circ \Delta a_\emptyset,
\end{equation}
where $\tilde\Theta_{\eta} := \tilde\Theta_{i_l} \circ \cdots \circ  \tilde\Theta_{i_1}$,
$$
	\tilde\Theta_{0} := (\tilde\theta_{0} \otimes \id + \id \otimes \tilde\theta_{0}  + \tilde\theta_{1} \otimes \kappa_\emptyset ),
$$
and
$$
	\tilde\Theta_{1} := (\tilde\theta_{1} \otimes \id + \id \otimes \tilde\theta_{1}).
$$
\end{thm}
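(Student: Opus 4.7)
The first equality $\Delta a_\eta = \Delta \tilde\theta_\eta(a_\emptyset)$ is tautological from the identity $a_\eta = \tilde\theta_\eta(a_\emptyset)$. The substantive claim is the second equality, and my plan is to reduce it to the one-step recursion
\begin{equation*}
	\Delta \circ \tilde\theta_i = \tilde\Theta_i \circ \Delta,\qquad i = 0,1, \tag{$\ast$}
\end{equation*}
holding on the image of $V$ in $H$. Iterating $(\ast)$ along the letters $x_{i_1},\ldots,x_{i_l}$ of $\eta$ from left to right then delivers $\Delta a_\eta = \tilde\Theta_\eta \Delta a_\emptyset$, since $\tilde\Theta_\eta = \tilde\Theta_{i_l}\circ\cdots\circ\tilde\Theta_{i_1}$ was defined to match $\tilde\theta_\eta = \tilde\theta_{i_l}\circ\cdots\circ\tilde\theta_{i_1}$.

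Since $\Delta = \tilde\Delta + \un\otimes(\cdot)$ and a direct check gives $\tilde\Theta_i(\un\otimes a_\eta)=\un\otimes a_{\eta x_i}$ (the $\tilde\theta_j\otimes(\cdot)$ summands annihilate $\un$ because $\tilde\theta_j(\un)=0$), $(\ast)$ is equivalent to $\tilde\Delta\circ\tilde\theta_i=\tilde\Theta_i\circ\tilde\Delta$ on $V$. I would prove this by induction on $|\eta|$. The base case $\eta=\emptyset$ is a short calculation from Lemma~\ref{le:tilde-delta-inductions}(1)--(3): one finds $\tilde\Delta a_{x_0}=a_{x_0}\otimes\un+a_{x_1}\otimes a_\emptyset$ and $\tilde\Delta a_{x_1}=a_{x_1}\otimes\un$, matching $\tilde\Theta_0(a_\emptyset\otimes\un)$ and $\tilde\Theta_1(a_\emptyset\otimes\un)$ respectively.

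For the inductive step, write $a_\eta = \theta_j a_\mu$ with $|\mu|=|\eta|-1$. Exploiting $\theta_j\tilde\theta_i=\tilde\theta_i\theta_j$, I rewrite
$\tilde\Delta\tilde\theta_i a_\eta = \tilde\Delta\,\theta_j\,\tilde\theta_i a_\mu$,
expand the outer $\tilde\Delta\circ\theta_j$ using Lemma~\ref{le:tilde-delta-inductions}(2) or (3), and pull $\tilde\theta_i$ through $\Delta_\shuffle$ using the recursion \eqref{deshuffle}. This reduces everything to $\tilde\Delta$ and $\Delta_\shuffle$ evaluated at $a_\mu$ (shorter length, inductive hypothesis applies), dressed with $\theta$'s, $\tilde\theta$'s and $\mu$. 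I then similarly expand the target $\tilde\Theta_i\tilde\Delta a_\eta = \tilde\Theta_i(\tilde\Delta\theta_j a_\mu)$ and match term by term. The required identities for the matching are elementary: (i) $\theta_j$ and $\tilde\theta_i$ commute on $V$; (ii) $\tilde\theta_i$ acts as a derivation on $(H,\mu)$, so $(\id\otimes\tilde\theta_i)$ commutes with multiplying on the second factor up to the Leibniz rule; and (iii) the commutator $[\tilde\theta_i,\kappa_\emptyset]=\kappa_{x_i}$ (with $\kappa_{x_i}(a_\nu):=a_\nu a_{x_i}$), which governs how the right-augmentation applied to the second tensor factor traverses the $\kappa_\emptyset$ occurring in $\tilde\Theta_0$.

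The main obstacle is the sub-case $j=0$, $i=0$, where Lemma~\ref{le:tilde-delta-inductions}(3) produces a $(\theta_1\otimes\mu)(\tilde\Delta\otimes\id)\Delta_\shuffle$ term that must be reconciled with the non-trivial $\tilde\theta_1\otimes\kappa_\emptyset$ summand of $\tilde\Theta_0$. Here the derivation property of $\tilde\theta_0$ on $H$ generates exactly the extra $\kappa_{x_0}$ needed to cancel the cross-terms. Conceptually, $(\ast)$ asserts that $\tilde\theta_1$ is a coderivation of $(H,\Delta)$, while $\tilde\theta_0$ fails to be one precisely by the defect $\tilde\theta_1\otimes\kappa_\emptyset$; the theorem amounts to making this deviation explicit and closing the recursion. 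Once the $j=0$, $i=0$ match is verified, the three remaining sub-cases are strictly simpler and the induction closes.
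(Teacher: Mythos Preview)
Your proposal is correct and follows essentially the same strategy as the paper: both reduce to the one-step commutation $\Delta\circ\tilde\theta_i=\tilde\Theta_i\circ\Delta$ on $V$ and prove it by induction (you on word length, the paper on degree), peeling off the leftmost letter via Lemma~\ref{le:tilde-delta-inductions} and using $\theta_j\tilde\theta_i=\tilde\theta_i\theta_j$ together with the Leibniz rule for $\tilde\theta_i$ on $H$. Your framing---isolating $(\ast)$ first and then iterating---is slightly tidier than the paper's direct induction on the full identity, but the substance and the case analysis (in particular the delicate $j=0$ sub-case involving $(\theta_1\otimes\mu)(\tilde\Delta\otimes\id)\Delta_\shuffle$) are the same.
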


\begin{proof}
The proof is via the degree of $a_\eta$.
Indeed, with $\Delta a_\emptyset = a_{\emptyset} \otimes \un + \un \otimes a_{\emptyset}$ and recalling that
$\tilde\theta_i (\un)=0$, it is immediate that
\begin{displaymath}
	\tilde\Theta_{1} \circ\Delta a_\emptyset 	= a_{x_1} \otimes \un + \un \otimes a_{x_1}
								= \Delta a_{x_1}.
\end{displaymath}
So the claim holds when $\deg(a_\eta)=1$. Before proceeding to the general case, it is instructive to compute
a few more lower degree terms. For example,
\allowdisplaybreaks{
\begin{align*}	
	\tilde\Theta_{0} \circ\Delta a_\emptyset &= a_{x_0} \otimes \un
			+ \un \otimes a_{x_0} + a_{x_1} \otimes a_{\emptyset} =\Delta a_{x_0}\\
	\tilde\Theta_{1}\circ\tilde\Theta_{1}  \circ\Delta a_\emptyset
		&= (\tilde\theta_{1} \otimes \id + \id \otimes \tilde\theta_{1}) ^2 \circ\Delta a_\emptyset \\
		&= (\tilde\theta_{1}\tilde\theta_{1} \otimes \id + 2 \tilde\theta_{1} \otimes \tilde\theta_{1}
				+ \id \otimes \tilde\theta_{1}\tilde\theta_{1})\circ \Delta a_\emptyset \\
		&= a_{x_1x_1} \otimes \un + \un \otimes a_{x_1x_1}=\Delta a_{x_1x_1}\\
		\tilde\Theta_{0}\circ\tilde\Theta_{1} \circ \Delta a_\emptyset
		&= (\tilde\theta_{0} \otimes \id + \id \otimes \tilde\theta_{0}
		+ \tilde\theta_{1} \otimes \kappa_\emptyset )\circ
			(\tilde\theta_{1} \otimes \id + \id \otimes \tilde\theta_{1})\circ\Delta a_\emptyset \\
		&= (\tilde\theta_{0}\tilde\theta_{1} \otimes \id + \id \otimes \tilde\theta_{0}\tilde\theta_{1}
				+ \tilde\theta_{0} \otimes \tilde\theta_{1} + \tilde\theta_{1} \otimes \tilde\theta_{0}\\
		& \quad		+  \tilde\theta_{1}\tilde\theta_{1}  \otimes \kappa_\emptyset
		+ \tilde\theta_{1} \otimes \kappa_\emptyset \tilde\theta_{1})(a_{\emptyset} \otimes \un
		+ \un \otimes a_{\emptyset})\\
		&= \tilde\theta_{0}\tilde\theta_{1} a_{\emptyset} \otimes \un
			+ \un \otimes \theta_{0}\tilde\theta_{1} a_\emptyset
			+ \tilde\theta_{1}\tilde\theta_{1} a_\emptyset  \otimes \kappa_\emptyset \un \\
		&= a_{x_1x_0} \otimes \un + \un \otimes a_{x_1x_0} + a_{x_1x_1} \otimes a_{\emptyset}
			= \Delta a_{x_1x_0}  \\
	\tilde\Theta_{1}\circ\tilde\Theta_{0} \circ\Delta a_\emptyset	
		&= (\tilde\theta_{1} \otimes \id + \id \otimes \tilde\theta_{1})
			\circ(\tilde\theta_{0} \otimes \id + \id \otimes \tilde\theta_{0}
				+ \tilde\theta_{1} \otimes \kappa_\emptyset )\circ\Delta a_\emptyset \\
		&= (\tilde\theta_{1}\tilde\theta_{0} \otimes \id + \id \otimes \tilde\theta_{1}\tilde\theta_{0}
				+ \tilde\theta_{0} \otimes \tilde\theta_{1} + \tilde\theta_{1} \otimes \tilde\theta_{0}\\
		& \quad		+  \tilde\theta_{1}\tilde\theta_{1}  \otimes \kappa_\emptyset
		+ \tilde\theta_{1} \otimes \tilde\theta_{1} \kappa_\emptyset )(a_{\emptyset} \otimes \un
		+ \un \otimes a_{\emptyset})\\
		&= \tilde\theta_{0}\tilde\theta_{1} a_{\emptyset} \otimes \un
			+ \un \otimes \tilde\theta_{0}\tilde\theta_{1} a_\emptyset
			+ \tilde\theta_{1}\tilde\theta_{1} a_\emptyset  \otimes \kappa_\emptyset \un
			+ \tilde\theta_{1}a_\emptyset  \otimes \tilde\theta_{1} \kappa_\emptyset \un \\
		&= a_{x_0x_1} \otimes \un + \un \otimes a_{x_0x_1}
			+ a_{x_1x_1} \otimes a_{\emptyset}
			+ a_{x_1} \otimes a_{x_1}
			= \Delta a_{x_0x_1}.%
\end{align*}}%
Note that in several places above the parentheses in expressions such as $\tilde\theta_{0}\tilde\theta_{1} a_{\emptyset}$ have been omitted for notational simplicity. The same simplification is applied when convenient further below.
Now assume that (\ref{def:fullcoprod2}) holds up to some fixed degree $n\geq 1$.
The following compact notation
$$
	\tilde\Theta_{i} = (\tilde\theta_{i} \otimes \id + \id \otimes \tilde\theta_{i}
	+ \delta_{0,i} \tilde\theta_1 \otimes \kappa_\emptyset)
$$
is used.
There are two cases to consider. First suppose that $\deg(a_{x_1 \eta}) =n$ so that $\deg(a_{x_1 \eta x_i})>n$, $i=0,1$. Using \eqref{def:fullcoprod1} and
Lemma~\ref{le:tilde-delta-inductions}, it is clear that
$$
	\Delta a_{x_1 \eta x_i} = (\theta_1 \otimes \id)\circ\tilde\Delta a_{\eta x_i} + \un \otimes a_{x_1 \eta x_i}.
$$
Employing, in addition, the induction hypothesis, the identity $\theta_i(\un)=0$, and the fact that the left- and right-augmentation operators commute on $V$, i.e., $\theta_j \tilde\theta_i = \tilde\theta_i \theta_j$ for $i,j=0,1$,
it then follows that
\allowdisplaybreaks{
\begin{align*}
	\Delta a_{x_1 \eta x_i}&= (\theta_1 \otimes \id)\circ(\Delta a_{\eta x_i} - \un \otimes a_{\eta x_i} ) + \un \otimes a_{x_1 \eta x_i} \\
	&= (\theta_1 \otimes \id)\circ\Delta a_{\eta x_i} + \un \otimes a_{x_1 \eta x_i} \\
	&= (\theta_1 \otimes \id) \circ(\tilde\theta_{i} \otimes \id + \id \otimes \tilde\theta_{i} + \delta_{0,i} \tilde\theta_1 \otimes \kappa_\emptyset)
			\circ\Delta a_\eta + (\id \otimes \tilde\theta_i)(\un \otimes a_{x_1\eta})\\
	&= (\tilde\theta_{i} \otimes \id + \id \otimes \tilde\theta_{i}+ \delta_{0,i} \tilde\theta_1 \otimes \kappa_\emptyset)
		\circ(\theta_1 \otimes \id)\circ\Delta a_\eta + (\id \otimes \tilde\theta_i)(\un \otimes a_{x_1\eta})\\
	&= (\tilde\theta_{i} \otimes \id + \id \otimes \tilde\theta_{i}+ \delta_{0,i} \tilde\theta_1 \otimes \kappa_\emptyset)
		\Big((\theta_1 \otimes \id)\circ\tilde\Delta a_\eta + \un \otimes a_{x_1\eta}\Big)\\
	&= (\tilde\theta_{i} \otimes \id
		+ \id \otimes \tilde\theta_{i}+ \delta_{0,i} \tilde\theta_1 \otimes \kappa_\emptyset)\circ\Delta a_{ x_1\eta}\\
	&=\tilde\Theta_{x_1\eta x_i}\circ \Delta a_\emptyset.%
\end{align*}}%
Now suppose instead that $\deg(a_{x_0 \eta})=n$ so that $\deg(a_{x_0 \eta x_i})>n$, $i=0,1$.
Sweedler's notation for the deshuffling coproduct \eqref{deshuffle}, i.e., $\Delta_\shuffle a_\eta=a_{\eta'}\otimes a_{\eta''}$, is useful here.
Analogous to the previous case, observe
\allowdisplaybreaks{
\begin{align*}
	\Delta a_{x_0 \eta x_i}& = (\theta_0 \otimes \id)\circ\tilde\Delta a_{\eta x_i} + \un \otimes a_{x_0 \eta x_i}
				+ (\theta_1 \otimes \mu)\circ(\tilde{\Delta}\otimes \id)\circ \Delta_{\shuffle}a_{\eta x_i}\\
		&=  (\tilde\theta_{i} \otimes \id + \id \otimes \tilde\theta_{i}
				+ \delta_{0,i} \tilde\theta_1 \otimes \kappa_\emptyset)
				\big((\theta_0 \otimes \id)\circ\tilde\Delta a_\eta + \un \otimes a_{x_0\eta}\big)\\
		& \quad + (\theta_1 \otimes \mu)\circ(\tilde{\Delta}\otimes \id)\circ (\tilde\theta_{i} \otimes \id
				+ \id \otimes \tilde\theta_{i} )
		 		\circ\Delta_{\shuffle}a_{\eta}\\
		&=  (\tilde\theta_{i} \otimes \id + \id \otimes \tilde\theta_{i}
			+ \delta_{0,i} \tilde\theta_1 \otimes \kappa_\emptyset)
			\big((\theta_0 \otimes \id)\circ \tilde\Delta a_\eta + \un \otimes a_{x_0\eta}\big)\\
		& \quad + (\theta_1 \otimes \mu)\circ(\tilde{\Delta}\tilde\theta_{i}a_{\eta'}\otimes a_{\eta''}
			+ \tilde{\Delta}a_{\eta'}\otimes \tilde\theta_{i}a_{\eta''})\\		
		&=  (\tilde\theta_{i} \otimes \id + \id \otimes \tilde\theta_{i}
			+ \delta_{0,i} \tilde\theta_1 \otimes \kappa_\emptyset)
		\big((\theta_0 \otimes \id)\circ \tilde\Delta a_\eta + \un \otimes a_{x_0\eta}\big)\\
		& \quad +
		(\theta_1 \otimes \mu)\circ
		\Big( \big((\tilde\theta_{i} \otimes \id + \id \otimes \tilde\theta_{i}
		+ \delta_{0,i} \tilde\theta_1 \otimes \kappa_\emptyset)\circ  \Delta a_{\eta'}\big)\otimes a_{\eta''} \\
		&\quad	- \big(\un \otimes \tilde\theta_{i}a_{\eta'}\big)\otimes a_{\eta''}
				+ \tilde{\Delta}a_{\eta'}\otimes \tilde\theta_{i}a_{\eta''}\Big)\\
		&=  (\tilde\theta_{i} \otimes \id + \id \otimes \tilde\theta_{i}
				+ \delta_{0,i} \tilde\theta_1 \otimes \kappa_\emptyset)
		\Big((\theta_0 \otimes \id)\circ \tilde\Delta a_\eta + \un \otimes a_{x_0\eta}\Big)\\
		& \quad +
		(\theta_1 \otimes \mu)\circ
		\Big( \big((\tilde\theta_{i} \otimes \id
			+ \delta_{0,i} \tilde\theta_1 \otimes \kappa_\emptyset) \tilde\Delta a_{\eta'}\big)\otimes a_{\eta''} \\
		&\quad	+ \big((\id \otimes \tilde\theta_{i})\circ  \Delta a_{\eta'}\big)\otimes a_{\eta''}
				- \big( \un \otimes \tilde\theta_{i}a_{\eta'} \big)\otimes a_{\eta''}
				+ \tilde{\Delta}a_{\eta'}\otimes \tilde\theta_{i}a_{\eta''}\Big)\\	
		&=  (\tilde\theta_{i} \otimes \id + \id \otimes \tilde\theta_{i}+ \delta_{0,i} \tilde\theta_1 \otimes \kappa_\emptyset)
		\Big((\theta_0 \otimes \id)\tilde\Delta a_\eta + \un \otimes a_{x_0\eta}\Big)\\
		& \quad 	+ (\theta_1 \otimes \mu)\circ \Big( \big((\tilde\theta_{i} \otimes \id
				+ \delta_{0,i} \tilde\theta_1 \otimes \kappa_\emptyset) \tilde\Delta a_{\eta'}\big)\otimes a_{\eta''} \\
		&\quad	+ \big((\id \otimes \tilde\theta_{i}) \tilde\Delta a_{\eta'}\big)\otimes a_{\eta''}
				+ \tilde{\Delta}a_{\eta'}\otimes \tilde\theta_{i}a_{\eta''}\Big).	
		\end{align*}}%
Applying the Leibniz rule for the right-augmentation operator $\tilde\theta_{i}$ to the product $(\id \otimes \mu)(\tilde{\Delta}a_{\eta'}\otimes a_{\eta''})$  gives
\allowdisplaybreaks{
\begin{align*}
	\Delta a_{x_0 \eta x_i}
&=  (\tilde\theta_{i} \otimes \id + \id \otimes \tilde\theta_{i}+ \delta_{0,i} \tilde\theta_1 \otimes \kappa_\emptyset)
		\Big((\theta_0 \otimes \id)\tilde\Delta a_\eta + \un \otimes a_{x_0\eta}\Big) \\
		& \quad 	+ (\tilde\theta_{i} \otimes \id + \id \otimes \tilde\theta_{i}
				+ \delta_{0,i} \tilde\theta_1 \otimes \kappa_\emptyset) (\theta_1 \otimes \mu)
				\circ (\tilde{\Delta}\otimes \id)\circ\Delta_{\shuffle}a_{\eta}\\
		&=  (\tilde\theta_{i} \otimes \id + \id \otimes \tilde\theta_{i}+ \delta_{0,i} \tilde\theta_1 \otimes \kappa_\emptyset)
				\circ \Delta a_{x_0\eta}\\
		&= \tilde\Theta_{x_0 \eta x_i} \circ \Delta a_{\emptyset},%
\end{align*}}%
which proves the theorem.
\end{proof}

The new coproduct formula above provides a simple recursive formula for the antipode of the output feedback Hopf algebra $H$.
This is the main result of this section.

\begin{prop}\label{prop:antipodeSpecial}
For any nonempty word $\eta= x_{i_1} \cdots x_{i_l}$, the antipode $S: H \to H$ in Theorem~\ref{th:output-feedback-HA} can be written as
\begin{equation*}
	Sa_\eta =  (-1)^{|\eta|-1}\tilde\Theta'_{\eta}(a_\emptyset),
\end{equation*}
where $|\eta|=l$ is the length of the word $\eta$, and
$$
	 \tilde\Theta'_{\eta}:= \tilde\theta'_{{i_l}} \circ \cdots \circ  \tilde\theta'_{i_1}
$$
with
$$
	\tilde\theta'_{1}(a_\eta):=-a_{\eta x_1}
$$
and
$$
	\tilde\theta'_{0}(a_\eta):= -\tilde\theta_{0}(a_\eta) +  a_\emptyset\tilde\theta_{1}(a_\eta)
	=-a_{\eta x_0} + a_{\eta x_1}a_\emptyset.
$$
\end{prop}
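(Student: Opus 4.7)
The plan is to establish the single-letter recursion
$$Sa_{\eta x_i} \;=\; -\tilde\theta'_i(Sa_\eta), \qquad \eta \in X^\ast,\ i \in \{0,1\},$$
and then iterate it along the letters of $\eta = x_{i_1}\cdots x_{i_l}$ to obtain the closed form. Starting from $Sa_\emptyset = -a_\emptyset$, $l$ successive applications of this recursion produce exactly $l$ copies of $-\tilde\theta'_{i_j}$ acting on $-a_\emptyset$, which yields $(-1)^{l-1}\tilde\Theta'_\eta(a_\emptyset)$. The operators $\tilde\theta'_i$ are first extended from $V$ to all of $H$ as derivations; because $H$ is commutative, this extension is internally consistent with the formulas $\tilde\theta'_1 = -\tilde\theta_1$ and $\tilde\theta'_0 = -\tilde\theta_0 + a_\emptyset\tilde\theta_1$ on all of $H$.

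The recursion itself is proved by induction on $\deg(a_{\eta x_i})$, combining the new coproduct formula $\Delta a_{\eta x_i} = \tilde\Theta_i \Delta a_\eta$ from the preceding theorem with the standard antipode recursion $Sa_\nu = -a_\nu - m\circ(S\otimes\id)\Delta' a_\nu$ from Theorem~\ref{th:antipode-induction}. Writing $\Delta a_\eta = \sum a_{\eta_{(1)}}\otimes a_{\eta_{(2)}}$, the reduced coproduct $\Delta' a_{\eta x_i}$ inherits a presentation in terms of $\tilde\theta_0$, $\tilde\theta_1$ and $\kappa_\emptyset$ applied to the Sweedler components of $\Delta a_\eta$, all of which have degree strictly less than $\deg(a_{\eta x_i})$. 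The inductive hypothesis therefore converts each $S(\tilde\theta_1 a_{\eta_{(1)}})$ into $\tilde\theta_1(Sa_{\eta_{(1)}})$ and each $S(\tilde\theta_0 a_{\eta_{(1)}})$ into $\tilde\theta_0(Sa_{\eta_{(1)}}) - a_\emptyset\tilde\theta_1(Sa_{\eta_{(1)}})$. In the case $i=1$, after this substitution the Leibniz rule for the derivation $\tilde\theta_1$ collapses the entire sum into $-\tilde\theta_1\bigl(a_\eta + \sum' Sa_{\eta_{(1)}}a_{\eta_{(2)}}\bigr) = \tilde\theta_1(Sa_\eta) = -\tilde\theta'_1(Sa_\eta)$, giving the claim immediately.

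The case $i=0$ is the main obstacle, because $\tilde\Theta_0$ carries the extra summand $\tilde\theta_1\otimes\kappa_\emptyset$ that is not a coderivation. This extra piece contributes terms $S(\tilde\theta_1 a_{\eta_{(1)}})(a_{\eta_{(2)}}a_\emptyset)$ to $m\circ(S\otimes\id)\Delta' a_{\eta x_0}$, which by the inductive hypothesis and commutativity become $a_\emptyset\tilde\theta_1(Sa_{\eta_{(1)}})a_{\eta_{(2)}}$. At the same time, the inductive hypothesis applied to $S(\tilde\theta_0 a_{\eta_{(1)}})$ produces matching summands $-a_\emptyset\tilde\theta_1(Sa_{\eta_{(1)}})a_{\eta_{(2)}}$, and the two families cancel pairwise. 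What survives assembles, by the Leibniz rule for $\tilde\theta_0$ applied to $a_\eta + \sum' Sa_{\eta_{(1)}}a_{\eta_{(2)}} = -Sa_\eta$, into $\tilde\theta_0(Sa_\eta) - a_\emptyset\tilde\theta_1(Sa_\eta) = -\tilde\theta'_0(Sa_\eta)$, closing the induction. The principal bookkeeping challenge is to identify precisely this cancellation among the several families of summands generated by $\tilde\Theta_0$ together with the inductive hypothesis; once that is in place, the remainder of the argument is a routine Leibniz repackaging identical in spirit to the $i=1$ case.
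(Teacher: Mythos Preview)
Your proposal is correct and follows essentially the same approach as the paper: both prove the one-step recursion $Sa_{\eta x_i}=-\tilde\theta'_i(Sa_\eta)$ by induction on degree, using the right-augmentation coproduct formula $\Delta a_{\eta x_i}=\tilde\Theta_i\Delta a_\eta$ together with the standard antipode recursion, and both handle the $i=0$ case via precisely the cancellation you identify between the $\tilde\theta_1\otimes\kappa_\emptyset$ contribution and the $a_\emptyset\tilde\theta_1$ piece coming from the inductive hypothesis on $S(\tilde\theta_0 a_{\eta_{(1)}})$. The only cosmetic difference is direction: the paper expands $-\tilde\theta'_0(Sa_\eta)$ and shows it equals $Sa_{\eta x_0}$, whereas you expand $Sa_{\eta x_0}$ and collapse it to $-\tilde\theta'_0(Sa_\eta)$; the algebra is identical.
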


\begin{proof}
The proof is via induction on the degree of $a_\eta$. The degree one case is excluded by assumption. For degree two and three it is quickly verified that
$$
	 \tilde\Theta'_{x_1}(a_\emptyset) = \tilde\theta'_{1}(a_\emptyset)=-a_{x_1} = Sa_{x_1},
$$
and
$$
	 \tilde\Theta'_{x_0}(a_\emptyset) = \tilde\theta'_{0}(a_\emptyset)=-a_{x_0}+a_{x_1}a_\emptyset = Sa_{x_0}.
$$
Now assume the theorem holds up to degree $n\geq 2$, and suppose $\deg(a_\eta)=n$. There are multiple cases to address. Using Sweedler's notation for the reduced coproduct
$$
	\Delta' a_\eta:= \Delta a_\eta - \un \otimes a_\eta - a_\eta \otimes \un =: {\sum}' a_{\eta(1)} \otimes a_{\eta(2)},
$$
first consider
the antipode $Sa_{\eta x_1} = - a_{\eta x_1} - {\sum}' (Sa_{\eta x_1(1)})a_{\eta x_1(2)}$. By definition of the coproduct,
 the letter $x_1$ appears always as the last letter in one of the factors in the sum ${\sum}' (Sa_{\eta x_1(1)})a_{\eta x_1(2)}$, which therefore splits as
$$
	{\sum}' (Sa_{\eta x_1(1)})a_{\eta x_1(2)} =
	{\sum_{\eta}}'  (Sa_{\eta(1)})a_{\eta(2) x_1} + {\sum_{\eta}}' (Sa_{\eta(1) x_1})a_{\eta(2)}.	
$$
Here the sum notation $ {\sum}'_{\eta}$ will indicate that the reduced coproduct of $\eta$ is being used.
Recall that in ${\sum}'_{\eta} (Sa_{\eta x_1(1)})a_{\eta x_1(2)}$, $a_{\eta x_1(1)} \in V$, whereas the term $a_{\eta x_1(2)} \in H$ may be a monomial.
Furthermore, ${\sum}'_{\eta} (Sa_{\eta(1)})a_{\eta (2) x_1}$ means that $x_1$ appears as the last letter in exactly one of the factors of this monomial.
Thus,
\allowdisplaybreaks{
\begin{align}
	Sa_{\eta x_1} &= \tilde\theta_{1}(- a_\eta)
					- {\sum_{\eta}}'  (Sa_{\eta(1)}) a_{\eta(2) x_1}
					- {\sum_{\eta}}'  (Sa_{\eta(1) x_1}) a_{\eta (2)} \label{simpleSTEP}\\
				&= \tilde\theta_{1}(- a_\eta)
					- {\sum_{\eta}}'  (Sa_{\eta(1)})  \tilde\theta_{1} (a_{\eta (2)})
					- {\sum_{\eta}}'   (S\tilde\theta_{1}(a_{\eta (1)}))a_{\eta (2)} \nonumber \\
				&= \tilde\theta'_{1}(a_\eta )
					+ \tilde\theta'_{1} {\sum_{\eta}}' (Sa_{\eta(1)}) a_{\eta (2)} \nonumber \\	
				&=  - \tilde\theta'_{1} (Sa_\eta) = - (-1)^{|\eta |-1} \tilde\theta'_{1}\tilde\Theta'_{\eta}(a_\emptyset)\nonumber\\
				&= (-1)^{|\eta x_1 |-1} \tilde\Theta'_{\eta x_1}(a_\emptyset). \nonumber %
\end{align}}%
Now the second case is considered, namely the antipode $S a_{\eta x_0} = - a_{\eta x_0} - {\sum}' (Sa_{\eta x_0(1)})a_{\eta x_0(2)}$. Analogous to the previous case, the suffix $x_0$ can be attached to the left or right factor in the term ${\sum}' (Sa_{\eta x_0(1)})a_{\eta x_0(2)}$. However, due to the particular nature of the letter $x_0$,  there is a third case, that is, when $x_0$ as the last letter in a word is {\em split} according to the coproduct $\Delta a_{x_0} = a_{x_0} \otimes \un + \un \otimes a_{x_0} + a_{x_1} \otimes a_{\emptyset}$. The calculation to show that $S a_{\eta x_0} =(-1)^{|\eta x_0 |-1} \tilde\Theta'_{\eta x_0}(a_\emptyset)$ goes as follows. First observe that
\allowdisplaybreaks{
\begin{align*}
	 (-1)^{|\eta x_0 |-1} \tilde\Theta'_{\eta x_0}(a_\emptyset)
	 &= (-1)^{|\eta x_0 |-1}  \tilde\theta'_{0}\tilde\Theta'_{\eta}(a_\emptyset)\\
	 &= -   \tilde\theta'_{0} \big((-1)^{|\eta|-1}\tilde\Theta'_{\eta}(a_\emptyset) \big)\\
	 &= -   \tilde\theta'_{0}(Sa_\eta).
\end{align*}}
Applying the Leibniz rule yields
\allowdisplaybreaks{
\begin{align*}
	 -   \tilde\theta'_{0}(S a_\eta)
	 &= -   \tilde\theta'_{0} \left(- a_{\eta} - {\sum_{\eta}}' (Sa_{\eta(1)})a_{\eta(2)}\right)\\
	 &=     \tilde\theta_{0} \left(- a_{\eta} - {\sum_{\eta}}' (Sa_{\eta(1)})a_{\eta(2)}\right)
	 	-  \kappa_\emptyset \tilde\theta_{1} \left(- a_{\eta}
		- {\sum_{\eta}}' (Sa_{\eta(1)})a_{\eta(2)}\right)\\\
	&= - a_{\eta x_0} + a_{\eta x_1}a_\emptyset
		- \tilde\theta_{0} \left({\sum_{\eta}}' (Sa_{\eta(1)})a_{\eta(2)}\right)
		+ \kappa_\emptyset \tilde\theta_{1} \left({\sum_{\eta}}' (Sa_{\eta(1)})a_{\eta(2)}\right)\\
	&= - a_{\eta x_0} + a_{\eta x_1}a_\emptyset \\
	& \quad	
		- {\sum_{\eta}}' \tilde\theta_{0} (Sa_{\eta(1)})a_{\eta(2)}
		- {\sum_{\eta}}' (Sa_{\eta(1)}) \tilde\theta_{0} (a_{\eta(2)})\\
	& \quad	
		+ {\sum_{\eta}}' \kappa_\emptyset \tilde\theta_{1} (Sa_{\eta(1)})a_{\eta(2)}
		+ {\sum_{\eta}}' (Sa_{\eta(1)}) \kappa_\emptyset \tilde\theta_{1} (a_{\eta(2)})\\
	&= - a_{\eta x_0}
		+ {\sum_{\eta}}'( -\tilde\theta_{0} +  \kappa_\emptyset \tilde\theta_{1} )(Sa_{\eta(1)})a_{\eta(2)}
		- {\sum_{\eta}}' (Sa_{\eta(1)}) \tilde\theta_{0} (a_{\eta(2)})\\
	& \quad	
		+ a_{\eta x_1}a_\emptyset
		+ {\sum_{\eta}}' (Sa_{\eta(1)}) \kappa_\emptyset \tilde\theta_{1} (a_{\eta(2)})\\
	&= - a_{\eta x_0}
		- {\sum_{\eta}}' (S\tilde\theta_{0}a_{\eta(1)})a_{\eta(2)}
		- {\sum_{\eta}}' (Sa_{\eta(1)}) \tilde\theta_{0} a_{\eta(2)}\\
	& \quad	
		+ a_{\eta x_1}a_\emptyset
		+ {\sum_{\eta}}' (Sa_{\eta(1)}) \tilde\theta_{1} (a_{\eta(2)})a_\emptyset.%
\end{align*}}%
Note in the final step above, the following identity was employed, which is a consequence of the
induction hypothesis. Namely,
\allowdisplaybreaks{
\begin{align*}
	{\sum_{\eta}}'( -\tilde\theta_{0} +  \kappa_\emptyset \tilde\theta_{1} )(Sa_{\eta(1)})a_{\eta(2)}
	&= {\sum_{\eta}}'\tilde\theta'_{0} (Sa_{\eta(1)})a_{\eta(2)}\\
	&= {\sum_{\eta}}'\tilde\theta'_{0} \big((-1)^{|\eta(1)|-1}\tilde\Theta'_{\eta(1)}(a_\emptyset)\big)a_{\eta(2)}\\
	&= - {\sum_{\eta}}'\big((-1)^{|\eta(1)x_0|-1}\tilde\theta'_{0} \tilde\Theta'_{\eta(1)}(a_\emptyset) \big)a_{\eta(2)}\\
	&= - {\sum_{\eta}}'(S\tilde\theta_{0}a_{\eta(1)})a_{\eta(2)}.%
\end{align*}}%
The calculation is completed via the observation that line \eqref{simpleSTEP} multiplied by $a_\emptyset$ implies that $- (Sa_{\eta x_1})a_\emptyset $ can be rearranged as
$$
	a_{\eta x_1}a_\emptyset
		+ {\sum_{\eta}}' (Sa_{\eta(1)})\tilde\theta_{1} (a_{\eta(2)})a_\emptyset
	= - (Sa_{\eta x_1})a_\emptyset
	- {\sum_{\eta}}' (S\tilde\theta_{1} a_{\eta(1)})a_{\eta(2)}a_\emptyset.
$$
Therefore,
\allowdisplaybreaks{
\begin{align*}
	-   \tilde\theta'_{0}(Sa_\eta)
	&= - a_{\eta x_0}
		- {\sum_{\eta}}'(S\tilde\theta_{0}a_{\eta(1)})a_{\eta(2)}
		- {\sum_{\eta}}' (Sa_{\eta(1)}) \tilde\theta_{0} a_{\eta(2)}\\
	& \quad	
		- (Sa_{\eta x_1})a_\emptyset - {\sum_{\eta}}' (S\tilde\theta_{1} a_{\eta(1)})a_{\eta(2)}a_\emptyset,
\end{align*}}%
which implies that $ (-1)^{|\eta x_0 |-1} \tilde\Theta'_{\eta x_0}(a_\emptyset)= S a_{\eta x_0}$. To see that the right-hand side of the last equality is equivalent to $S a_{\eta x_0}$, simply expand the latter using the standard antipode recursion defined in terms of the coproduct characterized in Lemma \ref{le:tilde-delta-inductions}.
\end{proof}

An elementary application of this theorem is the following corollary, which essentially states that
the coefficients of the antipode for coordinate functions ending in the letter $x_0$
always sum to zero. For example, $Sa_{x_0x_0}=-a_{x_0x_0}+a_{x_1}a_{x_0}+a_{x_1 x_0}a_\emptyset+ a_{x_0x_1}a_\emptyset-
a_{x_1}a_{x_1} a_{\emptyset}-a_{x_1x_1}a_{\emptyset}a_{\emptyset}$, so that $-1+1+1+1-1-1=0$. This claim is not so obvious
when approached by other
means.

\begin{cor}\label{co:Berlin-identity}
For any $\eta\in X^\ast$, $Sa_{\eta x_0}(\sum_{\xi\in X^\ast}\xi)=0$.
\end{cor}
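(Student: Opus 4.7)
The plan is to read the corollary off directly from Proposition \ref{prop:antipodeSpecial} by isolating the outermost right-augmentation. Since every word ending in $x_0$ has the form $\eta x_0$ with $\eta\in X^\ast$ (possibly empty), the proposition gives
\[
Sa_{\eta x_0}=(-1)^{|\eta x_0|-1}\,\tilde\theta'_0\bigl(\tilde\Theta'_{\eta}(a_\emptyset)\bigr),
\]
so $Sa_{\eta x_0}$ always lies in the image of the single operator $\tilde\theta'_0=-\tilde\theta_0+\kappa_\emptyset\tilde\theta_1$. It therefore suffices to establish the sharper statement: for every $Y\in H$, one has $\tilde\theta'_0(Y)(c^{\ast})=0$, where $c^{\ast}:=\sum_{\xi\in X^\ast}\xi$.

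By linearity reduce to a monomial $Y=a_{\mu_1}\cdots a_{\mu_l}$, $l\geq 0$. The crucial observation is that $c^{\ast}$ is the ``all-ones'' character: since $\langle c^{\ast},\nu\rangle=1$ for every $\nu\in X^\ast$, the character identity \eqref{character} forces $a_{\nu_1}\cdots a_{\nu_m}(c^{\ast})=1$ for every monomial in $H$. Combining this with the derivation property of $\tilde\theta_0$ and $\tilde\theta_1$ yields
\[
\tilde\theta'_0(Y)=\sum_{k=1}^{l}\bigl(-\,a_{\mu_1}\cdots a_{\mu_kx_0}\cdots a_{\mu_l}+a_\emptyset\,a_{\mu_1}\cdots a_{\mu_kx_1}\cdots a_{\mu_l}\bigr),
\]
and evaluating at $c^{\ast}$ collapses this to $\sum_{k=1}^{l}(-1+1)=0$. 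The boundary case $Y=\un$ (i.e.\ $l=0$) is immediate from $\tilde\theta_0(\un)=\tilde\theta_1(\un)=0$, and the case $\eta=\emptyset$ is handled directly by $Sa_{x_0}(c^{\ast})=(-a_{x_0}+a_{x_1}a_\emptyset)(c^{\ast})=-1+1=0$.

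There is no genuine obstacle here: the whole argument reduces to matching the $l$ copies of $-1$ produced by $\tilde\theta_0$ against the $l$ copies of $+1$ produced by $\kappa_\emptyset\tilde\theta_1$. The only point requiring attention is the bookkeeping in the first step, namely confirming that Proposition \ref{prop:antipodeSpecial} cleanly factors out exactly one copy of $\tilde\theta'_0$ when the terminal letter is $x_0$, after which the inner term $\tilde\Theta'_{\eta}(a_\emptyset)$ can be treated as an abstract element of $H$ with no further structural inspection required.
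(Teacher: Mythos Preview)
Your argument is correct and follows the same route as the paper: factor out one copy of $\tilde\theta'_0$ via Proposition~\ref{prop:antipodeSpecial} and then observe that $-\tilde\theta_0$ and $\kappa_\emptyset\tilde\theta_1$ cancel termwise when evaluated on the all-ones series. Your version is in fact cleaner, since you explicitly invoke the derivation property to handle the general polynomial $\tilde\Theta'_{\eta}(a_\emptyset)\in H$, whereas the paper's displayed computation silently replaces $Sa_\eta$ by a single coordinate function $a_\eta$ in the intermediate lines.
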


\begin{proof}
Observe
\begin{align*}
	Sa_{\eta x_0}\left(\sum_{\xi\in X^\ast} \xi\right)
	&=-\tilde\theta'_{0}(Sa_\eta)\left(\sum_{\xi\in X^\ast} \xi\right) \\
	&= -(-\tilde\theta_{0}(a_\eta) +  a_\emptyset\tilde\theta_{1}(a_\eta))\left(\sum_{\xi\in X^\ast} \xi\right) \\
	&=(+a_{\eta x_0} - a_{\eta x_1}a_\emptyset)\left(\sum_{\xi\in X^\ast} \xi\right) \\
	&=+1-1=0.
\end{align*}
\end{proof}

Finally, it worth pointing out that the multivariable generalization of Theorem~\ref{prop:antipodeSpecial} is straightforward.
In this situation, each coordinate function is given a superscript to indicate which component of the generating
series $c\in\re^m\langle\langle X \rangle\rangle$ it is associated with, i.e.,
\begin{displaymath}
	a^j_\eta(c):=\langle c_j,\eta\rangle,
\end{displaymath}
where $j=1,2,\ldots,m$ and $\eta\in X^\ast$ with the alphabet $X:=\{x_0,x_1,\ldots,x_m\}$. In which case, the statement of the generalized proposition
is identical to the current version except for the maps
$$
	\tilde\theta'_{i}(a^j_\eta):=-a^j_{\eta x_i},\quad i=1,2,\ldots,m
$$
and
$$
	\tilde\theta'_{0}(a^j_\eta):=-a^j_{\eta x_0} + \sum_{i=1}^m a^j_{\eta x_i}a^i_\emptyset.
$$


\section{Devlin's recursion from the feedback antipode}
\label{sect:devlin-recursion}

In this section it is shown that Devlin's recursion (\ref{eq:Devlin-recursion}) follows from the solution of the Abel equation (\ref{eq:Devlineq-without-gamma})
when written in terms of the output feedback antipode, that is, by combining Theorem~\ref{thm:Devlin-polynomials} and Proposition~\ref{prop:antipodeSpecial}.
The main step in creating this link is the following theorem.

\begin{thm}
\label{thm:antipoderecursion1}
Let $c_\delta := \delta + \sum_{k\ge 0} k! x_1^k$ be Ferfera's series. Then the following identities hold for any word $\eta \in X^*$
\begin{equation*}
	Sa_{\eta x_1}( - c) = \deg(a_\eta) Sa_{\eta} (- c)
\end{equation*}
\begin{equation*}
	Sa_{\eta x_0}(- c) = \deg(a_\eta) Sa_{\eta} (- c).
\end{equation*}
\end{thm}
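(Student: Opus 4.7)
The plan is to combine the compact antipode formula of Proposition~\ref{prop:antipodeSpecial} with the very particular structure of Ferfera's series $c$. First, I would rewrite Proposition~\ref{prop:antipodeSpecial} as two one-step recursions, extending the $\tilde\theta'_i$ to all of $H$ by the Leibniz rule (as is already implicit in the proof of that proposition). Tracking the sign $(-1)^{|\eta|-1}$ and unpacking $\tilde\theta'_1=-\tilde\theta_1$ and $\tilde\theta'_0=-\tilde\theta_0+\kappa_\emptyset\tilde\theta_1$ yields
\begin{equation*}
  Sa_{\eta x_1}=\tilde\theta_1(Sa_\eta), \qquad Sa_{\eta x_0}=\tilde\theta_0(Sa_\eta)-a_\emptyset\,\tilde\theta_1(Sa_\eta).
\end{equation*}

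Next I would evaluate at the character $-c$. Since $c=\sum_{k\ge 0}k!\,x_1^k$, one reads off $a_\nu(-c)=-k!$ when $\nu=x_1^k$ and $0$ otherwise. Because $-c$ acts as a character on $H$ through \eqref{character}, a monomial $a_{\nu_1}\cdots a_{\nu_\ell}$ is killed by $-c$ unless every argument word $\nu_j$ is a pure $x_1$-word $x_1^{k_j}$. I call such monomials \emph{surviving}, and for them $(a_{x_1^{k_1}}\cdots a_{x_1^{k_\ell}})(-c)=\prod_j(-k_j!)$.

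The crucial computation is $\tilde\theta_1(Sa_\eta)(-c)$. Applying the derivation $\tilde\theta_1$ to a surviving monomial $\prod_j a_{x_1^{k_j}}$ and evaluating at $-c$ produces
\begin{equation*}
  \sum_i (a_{x_1^{k_1}}\cdots a_{x_1^{k_i+1}}\cdots a_{x_1^{k_\ell}})(-c)=\prod_j(-k_j!)\,\sum_i(k_i+1).
\end{equation*}
Non-surviving monomials of $Sa_\eta$ cannot become surviving after one application of $\tilde\theta_1$ (appending $x_1$ to a word that is not a pure $x_1$-word still leaves it non-pure), so they contribute nothing. The decisive step is the grading observation: in a graded connected Hopf algebra the antipode preserves degree, so every monomial of $Sa_\eta$ has degree $\deg(a_\eta)$; for a surviving monomial this reads $\sum_i(k_i+1)=\deg(a_\eta)$. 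Thus $\sum_i(k_i+1)$ is \emph{the same} constant on every surviving term, can be factored out of the sum, and gives $\tilde\theta_1(Sa_\eta)(-c)=\deg(a_\eta)\,Sa_\eta(-c)$, proving the $x_1$ identity.

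For the $x_0$ identity I note that $\tilde\theta_0$ appends $x_0$ to some argument word, always producing a word ending in $x_0$, which can never equal any $x_1^k$. Hence $\tilde\theta_0(Sa_\eta)(-c)=0$, and combined with $a_\emptyset(-c)=-1$ the second recursion collapses to $Sa_{\eta x_0}(-c)=-a_\emptyset(-c)\,\tilde\theta_1(Sa_\eta)(-c)=\deg(a_\eta)\,Sa_\eta(-c)$. The only genuinely delicate piece is the degree-preservation/bookkeeping in the third paragraph; everything else is a matter of reading off surviving monomials and applying the recursions, so I do not anticipate a serious obstacle.
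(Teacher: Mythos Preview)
Your proposal is correct and, in fact, cleaner than the paper's own argument. The paper proceeds by induction on $\deg(a_\eta)$: it rewrites $Sa_{\eta x_1}=\tilde\theta_1(Sa_\eta)$ just as you do, but then passes to the adjoint $\tilde\theta_1^*=\tilde x_1^{-1}$ acting on $-c$, uses that $\tilde\theta_1^*$ is a coderivation for the character pairing, and expands $Sa_\eta$ via the \emph{standard} recursion $Sa_\eta=-a_\eta-{\sum}'(Sa_{\eta(1)})a_{\eta(2)}$ so that the induction hypothesis applies termwise to $Sa_{\eta(1)}$ and $a_{\eta(2)}$ to produce the factor $\deg(a_{\eta(1)})+\deg(a_{\eta(2)})=\deg(a_\eta)$. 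Your route bypasses the induction entirely: you exploit that $S$ preserves the grading of $H$, so every monomial in $Sa_\eta$ already has total degree $\deg(a_\eta)$, and for surviving monomials this degree is exactly the multiplier $\sum_i(k_i+1)$ produced by $\tilde\theta_1$. This is a more direct use of Proposition~\ref{prop:antipodeSpecial} and of the graded structure. Your handling of the $x_0$ identity (observing that $\tilde\theta_0$ kills all surviving monomials and that $a_\emptyset(-c)=-1$) is also correct; the paper's written proof actually omits the $x_0$ case, so on that point your argument is more complete.
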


The following corollary is a Hopf algebraic formulation and proof of Devlin's recursion \eqref{eq:Devlin-recursion}.

\begin{cor}\label{cor:DevlinAntipodeRec}
For $n\geq 1$ and $c=\sum_{k\geq 0} k!\,x_1^k$, Devlin's polynomials $a_n=((-c)^{\circ -1})_n=\sum_{\eta\in X^*_n} Sa_\eta(-c)\eta$ satisfy
\begin{equation*}
	a_n=	 (n-1) \sum_{\eta\in X^*_{n-1}} Sa_\eta(-c)\eta x_1 +
		 (n-2) \sum_{\eta\in X^*_{n-2}} Sa_\eta(-c)\eta x_0.
\end{equation*}
\end{cor}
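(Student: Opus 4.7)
The plan is to combine Theorem~\ref{thm:Devlin-polynomials} with the scaling identities supplied by Theorem~\ref{thm:antipoderecursion1}. Starting from the formula $a_n = \sum_{\eta \in X^*_n} Sa_\eta(-c)\,\eta$, the idea is simply to partition the indexing set $X^*_n$ according to the last letter of $\eta$, invoke Theorem~\ref{thm:antipoderecursion1} to pull the degree-dependent scalars out of the coefficients, and re-index.

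Concretely, assume $n \geq 2$ so that every $\eta \in X^*_n$ is nonempty. Because the grading satisfies $\deg(\xi x_1) = \deg(\xi) + 1$ and $\deg(\xi x_0) = \deg(\xi) + 2$, the suffix rule gives the disjoint decomposition
$$X^*_n = \{\xi x_1 : \xi \in X^*_{n-1}\} \sqcup \{\xi x_0 : \xi \in X^*_{n-2}\}.$$
Splitting the defining sum accordingly yields
$$a_n = \sum_{\xi \in X^*_{n-1}} Sa_{\xi x_1}(-c)\,\xi x_1 + \sum_{\xi \in X^*_{n-2}} Sa_{\xi x_0}(-c)\,\xi x_0.$$
Theorem~\ref{thm:antipoderecursion1} then reduces each coefficient to a scalar multiple of a lower-degree antipode value: for $\xi \in X^*_{n-1}$, $Sa_{\xi x_1}(-c) = \deg(a_\xi)\, Sa_\xi(-c) = (n-1)\,Sa_\xi(-c)$, and for $\xi \in X^*_{n-2}$, $Sa_{\xi x_0}(-c) = (n-2)\,Sa_\xi(-c)$. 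Since these scalars no longer depend on $\xi$, they factor outside the sums, producing the claimed formula. The case $n = 1$ is immediate from $a_1 = Sa_\emptyset(-c)\,\emptyset = \emptyset$, consistent with Devlin's initial datum $a_1 = 1$.

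There is no real obstacle here: the corollary is a bookkeeping step once Theorem~\ref{thm:antipoderecursion1} is established, since the full combinatorial content, namely the collapse of the antipode evaluated at the Ferfera series $-c$ to a scalar equal to the degree of the truncated word, is packaged inside that theorem. Everything else is a repartition of the sum by last letter together with the suffix-degree arithmetic.
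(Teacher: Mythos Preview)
Your argument is correct and matches the paper's intent: the corollary is stated there without proof, as an immediate consequence of Theorem~\ref{thm:antipoderecursion1}, and your last-letter decomposition of $X^*_n$ together with the degree arithmetic is exactly the bookkeeping the paper leaves implicit. One small remark: for $n=1$ the right-hand side of the displayed identity is an empty sum (since $X^*_0=X^*_{-1}=\emptyset$), so strictly speaking the recursion only makes sense for $n\geq 2$, matching Devlin's original formulation~\eqref{eq:Devlin-recursion}; your handling of $n=1$ as an initial datum is the appropriate reading.
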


The fact that the words in Ferfera's series $c=\sum_{k\geq 0} k!\,x_1^k$ only contain the letter $x_1$ gives the following.

\begin{cor}\label{cor:DevlinAntipodeSimple}
For $c=\sum_{k\geq 0} k!\,x_1^k$
\begin{equation*}
	Sa_\eta(-c) = (-1)^{|\eta|-1}\widehat{\Theta}'_{\eta}(a_\emptyset)(-c),
\end{equation*}
where
$$
	\widehat{\Theta}'_{\eta} := \tilde\theta'_{\widehat{i_l}} \circ \cdots \circ  \tilde\theta'_{\widehat{i_1}}
$$
with $\tilde\theta'_{\widehat{1}}:=\tilde\theta'_{1}$ and $\tilde\theta'_{\widehat{0}}:= a_\emptyset\tilde\theta_{1}.$
\end{cor}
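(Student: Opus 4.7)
The plan is to start from Proposition~\ref{prop:antipodeSpecial}, which gives $Sa_\eta = (-1)^{|\eta|-1}\tilde\Theta'_\eta(a_\emptyset)$ with $\tilde\Theta'_\eta = \tilde\theta'_{i_l}\circ\cdots\circ\tilde\theta'_{i_1}$, and to track which terms survive evaluation at the character $-c$. Since $c=\sum_{k\ge 0}k!\,x_1^k$ involves only words in $\{x_1\}^*$, the coordinate function $a_{\nu}(-c)=\langle -c,\nu\rangle$ vanishes on every word $\nu\in X^*$ containing the letter $x_0$. By \eqref{character}, the same vanishing extends to any monomial $a_{\nu_1}\cdots a_{\nu_r}$ in $H$ having at least one factor with $x_0$ in its word.

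Next I expand each $\tilde\theta'_0$ in the composition using the splitting
$$\tilde\theta'_{0} = -\tilde\theta_{0} + a_\emptyset\tilde\theta_{1} = -\tilde\theta_{0} + \tilde\theta'_{\widehat{0}}$$
supplied by Proposition~\ref{prop:antipodeSpecial}, while keeping $\tilde\theta'_1=\tilde\theta'_{\widehat{1}}$ unchanged at the $x_1$-slots. Distributing, this yields
$$
\tilde\Theta'_\eta(a_\emptyset) \;=\; \widehat{\Theta}'_\eta(a_\emptyset) \;+\; R,
$$
where $R$ is a sum of $2^{|\eta|_{x_0}}-1$ composite terms, each of which selects $-\tilde\theta_0$ in at least one of the $x_0$-slots. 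The goal is therefore to show that $R(-c)=0$.

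The key observation is that the augmentation operators $\tilde\theta_i$ (as derivations on $H$) and the multiplication $\kappa_\emptyset$ only \emph{append} letters or \emph{adjoin} an extra factor $a_\emptyset$; they never alter the letters already present in the words of existing factors. Consequently, the property \emph{``every monomial contains at least one factor $a_\nu$ with $x_0\in \nu$''} is preserved under each of $\tilde\theta_0,\tilde\theta_1,\kappa_\emptyset$. For any composite $T$ appearing in $R$, let $k$ be the earliest slot at which $-\tilde\theta_0$ is chosen. At that step a fresh $x_0$ is inserted into every monomial of the intermediate polynomial, and every subsequent operator preserves this property. Hence every monomial of $T(a_\emptyset)$ contains a factor $a_\nu$ with $x_0\in\nu$, so $T(a_\emptyset)(-c)=0$. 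Summing, $R(-c)=0$, and multiplying the remaining identity $\tilde\Theta'_\eta(a_\emptyset)(-c)=\widehat{\Theta}'_\eta(a_\emptyset)(-c)$ by $(-1)^{|\eta|-1}$ gives the claim.

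The main obstacle, such as it is, is a careful bookkeeping argument that the ``$x_0$ persists'' under the Leibniz-rule expansion of $\tilde\theta_i$ on products and under left-multiplication by $a_\emptyset$; once this is made precise the rest is immediate from the structure of the Ferfera series and the definition of $a_\nu$ as a character.
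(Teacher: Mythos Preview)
Your argument is correct and is precisely the intended one: the paper offers no formal proof of this corollary, only the one-line remark that ``the words in Ferfera's series $c=\sum_{k\geq 0} k!\,x_1^k$ only contain the letter $x_1$,'' and your expansion of $\tilde\theta'_0=-\tilde\theta_0+\tilde\theta'_{\widehat 0}$ together with the persistence of an $x_0$-factor under $\tilde\theta_0$, $\tilde\theta_1$, and $\kappa_\emptyset$ is exactly how that remark is made rigorous.
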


This last corollary implies that the calculation of the coefficient $Sa_\eta(-c)$ reduces to evaluating certain polynomials in only
the coordinate functions $a_{x_1^n}$ and $a_\emptyset$
against the series $-c$. For instance,
\allowdisplaybreaks{
\begin{align*}
	\widehat{\Theta}'_{x_1x_0} (a_\emptyset) &= a_{x_1x_1}a_\emptyset  \\
	\widehat{\Theta}'_{x_1x_1} (a_\emptyset) &= a_{x_1x_1} \\
	\widehat{\Theta}'_{x_0x_1} (a_\emptyset) &= a_{x_1x_1}a_\emptyset + a_{x_1}a_{x_1}\\
	\widehat{\Theta}'_{x_0x_0} (a_\emptyset) &= a_{x_1x_1}a_\emptyset a_\emptyset + a_{x_1}a_{x_1}a_\emptyset\\
	\widehat{\Theta}'_{x_0x_0x_1} (a_\emptyset)&=
		a_{x_1x_1x_1}a_\emptyset a_\emptyset
		+  4 a_{x_1x_1}a_{x_1}a_\emptyset
		+  a_{x_1}a_{x_1}a_{x_1}.
\end{align*}}%
So the coordinate functions involving the letter $x_0$ play no role in the calculation.

The proof of the main result is accomplished by considering the adjoint of the right-augmentation map.
Note that for
any word $\eta x_1 \in X^*$ it follows that $a_{\eta' x_1} (\eta x_1) = (\tilde\theta_1 a_{\eta'}) (\eta x_1) = a_{\eta'} (\tilde x_1^{-1} (\eta x_1)) = \delta_{\eta,\eta'}$. Therefore,
the adjoint of the right-augmentation map $\tilde\theta_1$ is $\tilde\theta^*_1:=\tilde x_1^{-1} $ so that $\theta^*_1 (\eta x_1) = \eta$ and zero otherwise.
A key observation is that the adjoint $\tilde\theta^*_1$ acts as a coderivation, i.e., for the feedback group element $c_\delta = \delta + c =\delta + \sum_{\eta\in X^\ast}\langle c,\eta \rangle\eta$ it satisfies
$$
	\langle \tilde\theta_1(a_{\eta}a_{\nu}) , c_\delta \rangle
	= \langle a_{\eta} \otimes a_{\nu} , \Delta^* \tilde\theta^*_1(c_\delta) \rangle
	= \langle a_{\eta} \otimes a_{\nu} , \tilde\theta^*_1 c_\delta \otimes c_\delta +  c_\delta \otimes \tilde\theta^*_1c_\delta \rangle,
$$
where $\Delta^*$ is the coproduct dual to the product $\mu$ in $H$, and $c_\delta$ is a group-like element, i.e., a character on $H$.

\begin{proof}[Proof of Theorem \ref{thm:antipoderecursion1}]
The proof is by induction on the degree of the coordinate functions.
So the claim is first verified for $a_{x_1x_1}$ and  $a_{x_0x_1}$, which have degrees three and four, respectively.
First observe that if $(-c)_\delta :=  \delta + (-c) =  \delta + \sum_{k\ge 0} (-k!) x_1^k$, then
$\tilde\theta^*_1 c_\delta = \un +\sum_{k>0} (k+1)!\, x_1^k$. Therefore,
applying Proposition~\ref{prop:antipodeSpecial}, as well as the general identity $S a_{\eta} = - a_{\eta x_1} - {\sum}' (Sa_{\eta(1)})a_{\eta(2)}$
(again Sweedler's notation for the reduced coproduct is used, namely, $\Delta'(a_\eta) =  {\sum}' a_{\eta(1)} \otimes a_{\eta(2)}$) gives the following:
\begin{align*}
	Sa_{x_1x_1} ( - c )
	&= - \tilde\theta'_1 (Sa_{x_1}) ( - c ) \\	
	&= \tilde\theta_1 (Sa_{x_1}) (- c ) \\	
	&= Sa_{x_1} (\tilde\theta^*_1 (-c)) \\
	&= - a_{x_1} \left(\sum_{k\ge 0} -(k+1)!\, x_1^k \right) \\
	&= 2 Sa_{x_1} (- c). 		
\end{align*}
For the coordinate function $a_{x_0x_1}$:
\begin{align*}
	Sa_{x_0x_1} (- c)
	&= - \tilde\theta'_1 (Sa_{x_0}) (- c) \\	
	&=  \tilde\theta_1 (Sa_{x_0}) ( - c) \\	
	&= Sa_{x_0} ( \tilde\theta^*_1 (-c)) \\
	&= (a_{x_1}a_\emptyset)( \tilde\theta^*_1 (-c)) \\
	&= \langle a_{x_1} \otimes a_\emptyset , \tilde\theta^*_1 c_\delta \otimes c_\delta
									+  c_\delta \otimes \tilde\theta^*_1c_\delta \rangle \\
	&=3 Sa_{x_0} (- c ). 		
\end{align*}
Now assume the identity holds up to some fixed degree $n\geq 4$ and select $\eta$ such that $\deg(\eta x_1)=n+1$ and $|\eta|_{x_0} \neq 0$.
Then, using
the fact that $\deg(a_{\eta(1)})=\deg(Sa_{\eta(1)})$, observe
\begin{align*}
	 Sa_{\eta x_1} ( - c)
	&= \Big(- a_{\eta x_1} - {\sum}' (Sa_{\eta x_1(1)})a_{\eta x_1(2)}\Big)( - c)\\
	&= \Big( - {\sum}' (Sa_{\eta x_1(1)})a_{\eta x_1(2)}\Big)( - c)\\
	&= \Big(-  \tilde\theta'_1{\sum}' (Sa_{\eta(1)})a_{\eta (2)}\Big)( - c)\\
	&= \left\langle {\sum}' (Sa_{\eta(1)})a_{\eta (2)} , \tilde\theta^*_1 c_\delta \otimes c_\delta
						+  c_\delta \otimes \tilde\theta^*_1c_\delta \right\rangle\\
	&= \Big({\sum}' \deg(a_{\eta(1)}) (Sa_{\eta(1)})a_{\eta (2)}\Big)( -c)
						+ \Big({\sum}'  \deg(a_{\eta(2)}) (Sa_{\eta(1)})a_{\eta (2)}\Big)( -c)\\
	&={\sum}' \big( \deg(a_{\eta(1)}) +  \deg(a_{\eta(2)}) \big) ((Sa_{\eta(1)})a_{\eta (2)}) (-c )\\
	&=  \deg(a_{\eta}) Sa_{\eta} (-c).							
\end{align*}%
The case where $\eta = x_1^n$ reduces directly to $Sa_\eta = - a_\eta$, and therefore
\begin{align*}
	Sa_{\eta x_1} ( - c )
	&= - \tilde\theta'_1 (Sa_{\eta}) ( - c ) \\	
	&= \tilde\theta_1 (Sa_{\eta}) (- c ) \\	
	&= Sa_{\eta} (\tilde\theta^*_1 (-c)) \\
	&= - a_{\eta} \left(\sum_{k\ge 0} -(k+1)!\, x_1^k \right) \\
	&= (n+1) Sa_{\eta} (- c). 		
\end{align*}

The theorem is proved.
\end{proof}



\end{document}